\begin{document}

\captionsetup[figure]{labelformat=empty}

\theoremstyle{plain}
\newtheorem{lemma}{Lemma}[section]
\newtheorem{theorem}[lemma]{Theorem}
\newtheorem{proposition}[lemma]{Proposition}
\newtheorem{corollary}[lemma]{Corollary}
\theoremstyle{definition}
\newtheorem{remark}[lemma]{Remark}
\title{On the classification of $S^3$-bundles over $\mathbb{C}P^2$}
\author{Liu wancheng}
\date{\today}
\maketitle

\begin{abstract}
    This paper presents a classification of the total spaces of $S^3$-bundles over $\mathbb{C}P^2$ up to orientation-preserving homotopy equivalence. Our approach proceeds in two steps: we first derive the PL-homeomorphism classification for these manifolds by computing their Kreck-Stolz invariants. Then, building upon this PL classification result and through an application of surgery theory, we establish the homotopy equivalence classification.
\end{abstract}

\section{Introduction}

Examples of nondiffeomorphic homeomorphic homogenuous spaces were first discovered by Kreck and Stolz  in \cite{kreck}. These examples belong to 
a more general class of manifolds called manifolds of type $r$. A closed 1-connected smooth $7$-dimensional manifold  $M$ is 
called a manifold of type $r$ if it satisfies the following properties:
\begin{enumerate}
    \item $H^0(M;\mathbb{Z})\cong H^2(M;\mathbb{Z})\cong H^5(M;\mathbb{Z})\cong H^7(M;\mathbb{Z})\cong \mathbb{Z}$
    \item $H^1(M;\mathbb{Z})\cong H^3(M;\mathbb{Z})\cong H^6(M;\mathbb{Z})=0$
    \item $H^4(M;\mathbb{Z})\cong \mathbb{Z}_r$
    \item $H^4(M;\mathbb{Z})$ is generated by $u^2$, where $u$ is a generator of $H^2(M;\mathbb{Z})$.
\end{enumerate} 
In \cite{kreck_2}\cite{kreck1998correction} Kreck and Stolz introduced invariants $s_i(\bar{s}_i)$ that provide a complete smooth (topological) classification of manifolds of type $r$.
However, the classification of manifolds of type $r$ up to homotopy equivalence remains unsolved. Partial results can be found in Kruggel's paper \cite{kruggel_2}.

The total spaces of $S^3$-bundles over $\mathbb{C}P^2$ with Euler class $e=\pm r$ are typical examples of manifolds of type $r$.
Escher  \cite{escher2014topology} obtained a classification of these manifolds up to diffeomorphism or homeomorphism using Kreck and Stolz's invariants. A partial homotopy classification based on Kruggel's analysis was also presented in the same paper. 
In this paper, we give a complete classification of these manifolds up to homotopy equivalence (Theorem \ref{homotopy}). The proof is based on the PL-homeomorphism classification and does not depend on  Kruggel's earlier work.

\emph{Convention}. All manifolds under consideration
are oriented, and all diffeomorphisms, homeomorphisms, PL-homeomorphisms and homotopy equivalences are orientation-preserving.

\subsection{Rank $4$ real vector bundles over $\mathbb{C}P^2$}

In this subsection we give a brief description of $4$-dimensional oriented real vector bundles over $\mathbb C P^2$ and fix the notations. 

Applying the functor $[-, BSO(4)]$ to the cofiber sequence 
\begin{eqnarray}\label{seq:1}
S^3 \stackrel{h}{\longrightarrow} S^2 \longrightarrow \mathbb{C}P^2 \stackrel{q}{\longrightarrow} S^4
\end{eqnarray} 
associated to the Hopf map $h \colon S^3 \to S^2$, and notice that $\pi_3(BSO(4))=0$, one has a short  sequence
\begin{eqnarray}\label{seq:2}
0\rightarrow \pi_4(BSO(4)) \rightarrow \left[\mathbb{C}P^2,BSO(4)\right] \rightarrow \pi_2(BSO(4))\rightarrow 0.
\end{eqnarray}
We choose orientation cohomology class $w_{\mathbb{C}P^2}$ of 
$\mathbb{C}P^2$,  and $w_{S^4}$ of $S^4$, respectively,  such that $q^*w_{S^4}=w_{\mathbb{C}P^2}=x^2$, where $x$ is a generator of $H^2(\mathbb{C}P^2;\mathbb{Z})$. 

Let $e$ and $p_1$ denote the Euler class and the first Pontrjagin class of a vector bundle, respectively.  We choose generators $ \alpha$ and $\beta$ of the group $\pi_4(BSO(4))$, which is known to be isomorphic to $\mathbb{Z} \oplus\mathbb{Z}$, such that the corresponding vector bundles over $S^4$ satisfy the following properties:

\begin{center}
\begin{tabular}{c|c|c} 
&$\alpha$ & $\beta$ \\ 
\hline 
$e$ & 0 & $\omega_{S^4}$ \\ 
\hline 
$p_1$ & $4\omega_{S^4}$ & $-2\omega_{S^4}$ \\ 
\end{tabular} 
\end{center}

Under the isomorphisms $\pi_2(BSO(4)) \cong \mathbb{Z}_2 \cong H^2(\mathbb C P^2;\mathbb Z_2)$, the homomorphism $[\mathbb{C}P^2,BSO(4)] \to \pi_2(BSO(4))$ is identified with the homomorphism by taking the second Stiefel-Whitney class $w_2$ of a vector bundle. 

Therefore, from the exact sequence (\ref{seq:2}),  one may deduce that $4$-dimensional oriented real bundles over $\mathbb{C}P^2$ can be obtained by the following two pullback diagrams and parametrized by a pair of integers $(k,l)$ in each case.

\begin{figure*}[htbp]
\centering
\captionsetup[subfigure]{labelformat=empty}
\begin{subfigure}{0.45\textwidth}
\centering
\begin{tikzcd}
\xi_{k,l}\arrow[r]\arrow[d] & k\alpha +l\beta \arrow[d]\\
\mathbb{C}P^2\arrow[r,"q"] & S^4
\end{tikzcd}
  \caption{Spin cases}
\end{subfigure}
\hfill
\begin{subfigure}{0.45\textwidth}
  \centering
    \begin{tikzcd}
    \xi_{k,l}^{\prime}\arrow[r,]\arrow[d] & \gamma\oplus\varepsilon^2\vee k\alpha +l\beta \arrow[d]\\
    \mathbb{C}P^2\arrow[r,"p"] & \mathbb{C}P^2\vee S^4
    \end{tikzcd}
  \caption{Nonspin cases}
\end{subfigure}
\end{figure*}
\noindent 
Here $\gamma$ is the tautological line bundle over $\mathbb{C}P^2$, $p$ is the map collapsing the boundary of a closed disc embeded in the interior of the $4$-cell of $\mathbb{C}P^2$ to a point. 
An easy calculation shows  

\begin{equation}\label{cc}
\begin{aligned}
    p_1(\xi_{k,l})=(4k-2l)w_{\mathbb{C}P^2}, &\ e(\xi_{k,l})=lw_{\mathbb{C}P^2},\\ 
    p_1(\xi_{k,l}^{\prime})=(4k-2l+1)w_{\mathbb{C}P^2}, &\ e(\xi_{k,l})=lw_{\mathbb{C}P^2}.
\end{aligned}
\end{equation}

\subsection{Classification Results}
Let $M_{k,l}$ and $M_{k,l}^{\prime}$ denote the total spaces of the  sphere bundles associated with $\xi_{k,l}$ and $\xi_{k,l}^{\prime}$, respectively. Noticing that $\mathbb C P^2$ is nonspin, a straightforward calculation shows that $M_{k,l}$ is nonspin and $M_{k,l}^{\prime}$ is spin.
We are interested in the classification of these natural geometric objects up to diffeomorphism, homeomorphism or homotopy equivalence. 

When $l \ne 0$, diffeomorphism and homeomorphism classifications are obtained by applying Kreck-Stolz invariants \cite{kreck_2}. 
Similar methods will produce a PL-homeomorphism classification (see \S \ref{tool}). We sum up the results in the following theorem.

\begin{theorem}\label{result}$\ $
    \begin{enumerate}
        \item $M_{k,l}$ is diffeomorphic to $M_{k^{\prime},l}$ if and only if $k$ and $k^{\prime}$ satisfy the following:
        \begin{enumerate}
            \item $k \equiv k^{\prime} \pmod{6l}$,
            \item $(k-k^{\prime})(3(k+k^{\prime}-l)+1) \equiv 0 \pmod{168l}$.
        \end{enumerate}
        \item $M^{\prime}_{k,l}$ is diffeomorphic to $M^{\prime}_{k^{\prime},l}$ if and only if $k$ and $k^{\prime}$ satisfy the following:
        \begin{enumerate}
            \item $k\equiv k^{\prime}\pmod{12l}$,
            \item $(k-k^{\prime})(k+k^{\prime}-l+2)\equiv 0 \pmod{56l}$.
        \end{enumerate}
        \item $M_{k,l}$ is homeomorphic to $M_{k^{\prime},l}$ if and only if $k\equiv k^{\prime}\pmod{6l}$. 

        \item $M^{\prime}_{k,l}$ is homeomorphic to $M^{\prime}_{k^{\prime},l}$ if and only if $k\equiv k^{\prime}\pmod{ 12l}$.
        \item  $M_{k,l}$ is PL-homeomorphic to $M_{k^{\prime},l}$ if and only if $k\equiv k^{\prime}\pmod{ 6l}$,
        \item $M^{\prime}_{k,l}$ is PL-homeomorphic to $M^{\prime}_{k^{\prime},l}$ if and only if $k\equiv k^{\prime}\pmod{ 12l}$.
    \end{enumerate} 
\end{theorem}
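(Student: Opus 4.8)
The plan is to identify $M_{k,l}$ and $M_{k,l}^{\prime}$ (for $l\ne 0$) as manifolds of type $r=|l|$ and to run the Kreck--Stolz machine. That they are of type $r$ follows from the Gysin sequence of the $S^3$-bundle together with the values of $e$ and $p_1$ recorded in (\ref{cc}); the nonspin resp.\ spin dichotomy has already been observed. By \cite{kreck_2}, \cite{kreck1998correction} a manifold of type $r$ is classified up to diffeomorphism by the triple $(s_1,s_2,s_3)\in(\mathbb{Q}/\mathbb{Z})^3$ and up to homeomorphism by $(\bar s_1,s_2,s_3)$, in each case modulo the action of the cohomology automorphism $u\mapsto -u$. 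So parts (1)--(4) reduce to two tasks: compute the invariants of $M_{k,l}$ and $M_{k,l}^{\prime}$ as explicit functions of $(k,l)$, and translate the resulting identities in $\mathbb{Q}/\mathbb{Z}$ into congruences between $k$ and $k^{\prime}$.

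To carry out the computation I would use the associated disc bundle as a coboundary. Write $W_{k,l}=D(\xi_{k,l})$, so $\partial W_{k,l}=M_{k,l}$; since $W_{k,l}$ deformation retracts onto its zero section, $H^*(W_{k,l})\cong H^*(\mathbb{C}P^2)$, $H^*(W_{k,l},\partial W_{k,l})$ is given by the Thom isomorphism, and the map $H^4(W_{k,l},\partial W_{k,l})\to H^4(W_{k,l})$ is multiplication by the Euler number $l$. The stable tangent bundle of $W_{k,l}$ is $\pi^*(T\mathbb{C}P^2\oplus\xi_{k,l})$, so from $p_1(\mathbb{C}P^2)=3x^2$ and (\ref{cc}) one reads off $p_1(W_{k,l})=(4k-2l+3)x^2$ (and $p_1(W_{k,l}^{\prime})$ similarly from the second line of (\ref{cc})); the signature of $W_{k,l}$ is $\mathrm{sign}(l)$, coming from the intersection form on $H^4(W_{k,l},\partial W_{k,l};\mathbb{Q})$. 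Choosing on $W_{k,l}$ the $\mathrm{spin}^c$-structure restricting to the preferred one on $M_{k,l}$ (its characteristic class $c$ being the multiple of $x$ fixed by the conventions of \cite{kreck_2}), one substitutes the rational characteristic numbers $c^4[W,\partial W]$, $c^2p_1[W,\partial W]$, $p_1^2[W,\partial W]$ — computed by lifting one factor to relative cohomology, which introduces $l$ in the denominators — together with $\sigma(W_{k,l})$ into the Kreck--Stolz formulas, obtaining $s_1,s_2,s_3$ as explicit rational functions of $k$, quadratic in the case of $s_1$, with $l$ in the denominator. The bundle $W_{k,l}^{\prime}=D(\xi_{k,l}^{\prime})$ is treated identically. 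These calculations run parallel to those of Escher \cite{escher2014topology}.

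For the PL statements (5)--(6) I would add the following, which is the content of \S\ref{tool}. Since $PL/O$ is $6$-connected with $\pi_7(PL/O)=\Theta_7\cong\mathbb{Z}_{28}=bP_8$, two smooth closed $7$-manifolds are PL-homeomorphic if and only if one is diffeomorphic to the other connect-summed with a homotopy $7$-sphere. Connect-summing with $\Sigma\in\Theta_7$ fixes $s_2,s_3$ and shifts $s_1$ by the Eells--Kuiper invariant $\mu(\Sigma)$, which identifies $\Theta_7$ with the order-$28$ subgroup $\tfrac1{28}\mathbb{Z}/\mathbb{Z}\subset\mathbb{Q}/\mathbb{Z}$; hence the PL classification of manifolds of type $r$ is governed by $(28s_1,s_2,s_3)$ (up to $u\mapsto -u$), which for these manifolds is the same data that governs the topological classification. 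Concretely, from the formula for $s_1$ one checks that $28s_1(M_{k,l})\in\mathbb{Q}/\mathbb{Z}$ is unchanged under $k\mapsto k+6l$ — the essential point being the divisibility of a difference of two squares of linear forms in $k$, e.g.\ $(4(k+6l)-2l+3)^2-(4k-2l+3)^2=48l\,(4k+10l+3)$ against the normalising constant of the $p_1^2$-term of $28 s_1$, the $c^4$- and $c^2p_1$-contributions being handled similarly — and likewise $28s_1(M_{k,l}^{\prime})$ is unchanged under $k\mapsto k+12l$. Thus the extra divisibility conditions of parts (1)--(2) become vacuous after passing to $28s_1$, and once $k\equiv k^{\prime}\pmod{6l}$ (resp.\ $\pmod{12l}$) one may choose $\Sigma$ realising the residual shift of $s_1$, so that $M_{k,l}\#\Sigma\cong M_{k^{\prime},l}$ by part (1) (resp.\ (2)); conversely a PL-homeomorphism is in particular a homeomorphism, so parts (5)--(6) follow from (3)--(4).

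The main obstacle, and the bulk of the work, is the elementary number theory in the final step of each case: extracting the stated congruences from equalities of $\mathbb{Q}/\mathbb{Z}$-valued invariants. One must keep careful track of (i) the action of $u\mapsto -u$, which modifies the parts of $s_2,s_3$ that are odd in $c$ and so must be allowed as an alternative in the comparison; (ii) the $2$-, $3$- and $7$-primary contributions, which are exactly what assemble the moduli $6l$, $12l$, $56l$ and $168l$ — note $168=6\cdot 28$ and $56=2\cdot 28$, so the prime $7$ and an extra power of $2$ sit precisely in the smooth-but-not-PL part; and (iii) the passage from $s_1\in\mathbb{Q}/\mathbb{Z}$ to $28s_1\in\mathbb{Q}/\mathbb{Z}$. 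A secondary technical point is confirming that the topological invariants $\bar s_1,s_2,s_3$ determine, and are determined by, $28s_1,s_2,s_3$ on this particular family, which is what forces parts (3) and (5), and likewise (4) and (6), to carry identical statements.
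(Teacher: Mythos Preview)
Your proposal is correct and follows essentially the same route as the paper: use the disc bundle $W_{k,l}$ (resp.\ $W'_{k,l}$) as coboundary, compute $z^4$, $z^2p_1$, $p_1^2$ and the signature from the Thom isomorphism and $p_1(W)=\pi^*(p_1(\mathbb{C}P^2)+p_1(\xi))$, plug these into the Kreck--Stolz formulas to obtain $s_1,s_2,s_3$ explicitly, and extract the congruences; the PL refinement via the $\Theta_7$-action shifting $s_1$ by multiples of $\tfrac{1}{28}$ is exactly the content of Theorem~\ref{PL}. The one superfluous concern in your outline is the $u\mapsto -u$ action, which is vacuous here since every term in the $S_i$-formulas the paper uses is even in $z$.
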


Escher \cite{escher2014topology} calculated the Kreck-Stolz's invariants of these manifolds and obtained the diffeomorphism and homeomorphism classifications. 

The main result of this paper is a homotopy classification of the total spaces of $S^3$-bundles over $\mathbb C P^2$ with non-vanishing Euler classes.

\begin{theorem}\label{homotopy}
    $M_{k,l}$ is homotopy equivalent to $M_{k^{\prime},l}$ if and only if $k\equiv k^{\prime}$ $\pmod{ 6}$. 
     The same holds when $M_{k,l}$ and $M_{k^{\prime},l}$ are replaced by $M^{\prime}_{k,l}$ and $M^{\prime}_{k^{\prime},l}$, respectively. 
\end{theorem}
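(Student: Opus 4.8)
The plan is to use surgery theory on the structure set of the manifolds $M_{k,l}$ (and $M'_{k,l}$), taking as our input the PL-homeomorphism classification of Theorem \ref{result}, parts (5) and (6). First I would observe that the congruence $k \equiv k' \pmod 6$ (resp. $\pmod{12}$) is clearly necessary: a homotopy equivalence must preserve the linking form on $H^4 \cong \mathbb{Z}_r$ together with the first Pontrjagin class mod torsion and, more to the point, the homeomorphism classification already forces a $\pmod{6l}$ (resp. $\pmod{12l}$) relation at the ambient level, and reducing the bundle data should produce homotopy-invariant obstructions that survive only modulo $6$ (resp. $12$); I would extract these from the Kreck-Stolz invariants $\bar s_i$, keeping only their homotopy-invariant part (e.g. $\bar s_1$, an Eells-Kuiper-type invariant, is \emph{not} homotopy invariant, but a suitable linear combination of the $\bar s_i$ and the linking form is). So one direction is: sufficiency is the real content, necessity follows by identifying which part of the PL/TOP invariants is a homotopy invariant.

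For sufficiency, fix $l \neq 0$ and suppose $k \equiv k' \pmod 6$. The strategy is: (i) produce a homotopy equivalence $f\colon M_{k,l} \to M_{k',l}$ abstractly, then (ii) worry about whether it is orientation-preserving (it can be arranged to be, by composing with a self-equivalence if needed, or is automatic once we track the fundamental classes). To build $f$, I would run the surgery exact sequence
\[
\mathcal{L}_8(\mathbb{Z}) \to \mathcal{S}^{\mathrm{TOP}}(M_{k',l}) \to [M_{k',l}, G/\mathrm{TOP}] \to \mathcal{L}_7(\mathbb{Z}) = 0,
\]
and the analogous PL sequence. Since $M_{k',l}$ is a closed simply-connected $7$-manifold, $L_7(\mathbb{Z})=0$, and the action of $L_8(\mathbb{Z}) \cong \mathbb{Z}$ on the structure set is understood. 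The key computation is $[M_{k',l}, G/\mathrm{TOP}]$: using that $G/\mathrm{TOP}$ has homotopy groups $\mathbb{Z}, 0, \mathbb{Z}_2, 0, \mathbb{Z}, \dots$ and the cohomology of $M_{k',l}$ (which is that of a manifold of type $r$ with $r = |l|$), one finds this normal-invariant set is a finite abelian group whose order I would compute from $H^2(M;\mathbb{Z}_2)$, $H^4(M;\mathbb{Z}) \cong \mathbb{Z}_l$, and $H^6(M;\mathbb{Z}_2)$. Then the structure set $\mathcal{S}^{\mathrm{TOP}}(M_{k',l})$ is the quotient of this by the $L_8$-action, and its cardinality tells us exactly how many TOP (hence, by Theorem \ref{result}(3)(4), how many TOP-distinct but relevant) manifolds $M_{k,l}$ can be homotopy equivalent to a fixed $M_{k',l}$. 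Comparing that count with the number of residues $k \pmod{6l}$ (resp. $\pmod{12l}$) sitting inside a fixed residue class $\pmod 6$ (resp. $\pmod{12}$) — which is exactly $|l|$ — should show these two numbers agree, forcing all of them to lie in a single homotopy type.

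The main obstacle I anticipate is the bookkeeping in step (ii)-to-last: matching the surgery-theoretic count with the arithmetic count, and in particular making sure the $L_8(\mathbb{Z}) = \mathbb{Z}$ action on the normal invariants is computed with the correct index (the relevant subtlety is the difference between the image of the assembly map and the full Wall group, governed by the signature/Arf obstructions and by whether $M$ bounds appropriately). A secondary nuisance is passing between TOP, PL and the homotopy category: since in dimension $7$ we have $\mathrm{PL} = \mathrm{TOP}$ rationally and $\mathrm{Top}/\mathrm{PL}$ is a $K(\mathbb{Z}_2,3)$, the PL and TOP structure sets of $M_{k',l}$ can differ by a $\mathbb{Z}_2$ coming from $H^3(M;\mathbb{Z}_2) = 0$ — so in fact they coincide here, which is convenient, but I would need to verify $H^3(M_{k',l};\mathbb{Z}_2) = 0$ explicitly (it follows from the cohomology list of a type-$r$ manifold). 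Finally, orientation-preservation: I would check that $M_{k,l}$ admits an orientation-reversing self-homotopy-equivalence (or show it does not and that this is harmless), since the theorem is stated for orientation-preserving equivalences; this should follow from the self-equivalences detected in $[M,M]$ acting on $H^2$.
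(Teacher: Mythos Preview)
Your proposal has a genuine gap in the sufficiency direction, and the logical structure is essentially inverted relative to what is needed.

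The counting argument you sketch cannot work as stated. Knowing that $|\mathscr S^{\mathrm{PL}}(M_{k',l})|$ equals the number of PL-types $M_{k,l}$ with $k\equiv k'\pmod 6$ does \emph{not} show that those particular manifolds occur as sources of elements in the structure set. The structure set consists of homotopy equivalences $g\colon N\to M_{k',l}$; a priori the sources $N$ could be manifolds of type $|l|$ that are not sphere bundles over $\mathbb{C}P^2$ at all. A cardinality match is therefore not enough: you must actually \emph{produce} maps $M_{k,l}\to M_{k',l}$. (In fact the counts do not even match in the nonspin case: the paper computes $\mathscr S^{\mathrm{PL}}(M_{k',l})\cong\mathbb Z_{2l}$, while there are only $|l|$ residues $k\pmod{6l}$ in a fixed class mod $6$, so you would additionally have to analyse the action of self-equivalences.) Relatedly, your guess that the spin answer is $k\equiv k'\pmod{12}$ is wrong; the theorem asserts $\pmod 6$ in both cases.

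The paper proceeds in the opposite order. It first \emph{constructs} explicit fiber homotopy equivalences $f_j\colon M_{k+6j,l}\to M_{k,l}$ (and $f'_j$ in the spin case) by an obstruction-theoretic analysis of $[\mathbb{C}P^2,BSG(4)]$; this already gives sufficiency. It then computes $\mathscr S^{\mathrm{PL}}(M_{k,l})\cong\mathbb Z_{2l}$ and shows $\eta(f_j)=j$, so the $f_j$ exhaust the structure set. Necessity then falls out: any homotopy equivalence $M\to M_{k,l}$ must be concordant to some $f_j$, hence $M$ is PL-homeomorphic to $M_{k+6j,l}$, and the PL classification finishes. Thus the paper does not extract homotopy invariants from the $\bar s_i$ at all; necessity comes from surjectivity onto the structure set, which in turn rests on the constructive Lemma~\ref{fiber} that your proposal is missing.
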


From the exact sequence associated to the fiber bundle $S^3 \to M'_{k,l} \to \mathbb C P^2$
$$\cdots \to \pi_4(S^3) \to \pi_4(M'_{k,l}) \to \pi_4(\mathbb C P^2) \to \cdots$$
one  sees that $\pi_4(M'_{k,l})$ is either isomorphic to $\mathbb Z_2$ or vanishes.
It is shown in \cite{escher2014topology} that  $\pi_4(M^{\prime}_{k,l})$ is isomorphic to $\mathbb{Z}_2$ when $l$ is even, and conjectured that $\pi_4(M^{\prime}_{k,l})$ vanishes when $l$ is odd, as suggested by calculations for specific examples. As a corollary of Theorem \ref{homotopy} we confirmed this conjecture. 

\begin{corollary}\label{cor:hmtpgp}
The homotopy group $\pi_4(M^{\prime}_{k,l})$ vanishes when $l$ is odd.
\end{corollary}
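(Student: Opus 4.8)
The plan is to deduce the corollary from Theorem~\ref{homotopy} by reducing it to finitely many model computations. As recalled above, the homotopy sequence of $S^3\to M'_{k,l}\to\mathbb{C}P^2$ shows that $\pi_4(M'_{k,l})$ is either $\mathbb{Z}_2$ or $0$, and it vanishes exactly when the fiber class $\eta\in\pi_4(S^3)\cong\mathbb{Z}_2$ dies in $M'_{k,l}$, equivalently when the connecting map $\partial\colon\pi_5(\mathbb{C}P^2)\to\pi_4(S^3)$ is onto. Since $\pi_4$ is a homotopy invariant, Theorem~\ref{homotopy} tells us that for each fixed odd $l$ the value of $\pi_4(M'_{k,l})$ depends only on the residue of $k$ modulo $6$; thus it suffices to decide the question on one representative of each of the six residue classes.

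To handle the representatives I would compute $\partial$ from the bundle data. Writing $\mathbb{C}P^2=S^2\cup_\eta e^4$ and pulling $\xi'_{k,l}$ back over the skeleta presents $M'_{k,l}$ as the total space $E$ of the induced $S^3$-bundle over $\mathbb{C}P^1$ with a $4$-cell and a $7$-cell attached, so that $\partial$ is governed by the clutching of $\xi'_{k,l}$ over the $4$-cell and the $5$-sphere, hence by $p_1(\xi'_{k,l})=(4k-2l+1)w_{\mathbb{C}P^2}$ and $e(\xi'_{k,l})=l\,w_{\mathbb{C}P^2}$ as in \eqref{cc}. The parity of $l$ already leaves a cohomological trace: from the integral cohomology of a manifold of type $l$ one finds $H^3(M'_{k,l};\mathbb{Z}_2)=H^4(M'_{k,l};\mathbb{Z}_2)=0$ precisely when $l$ is odd, while for $l$ even both groups are $\mathbb{Z}_2$ --- exactly the degrees in which the surviving $\mathbb{Z}_2\subset\pi_4$ is visible in the even case. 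For the residues $k\equiv 0,2\pmod 6$ one can also argue more geometrically: for suitable such $k$ the bundle $\xi'_{k,l}$ admits a complex structure (which forces $4k+1$ to be an odd square, i.e. $k=j(j+1)$), so $M'_{k,l}$ becomes a circle bundle over the $\mathbb{C}P^1$-bundle $\mathbb{C}P(\xi'_{k,l})$ and $\pi_4(M'_{k,l})\cong\pi_4(\mathbb{C}P(\xi'_{k,l}))$; for instance $T^1\mathbb{C}P^2$ is diffeomorphic to $SU(3)/S^1$, so has $\pi_4=0$, and it coincides with $M'_{2,3}$, confirming the claim for $l=3$ and $k\equiv 2\pmod 6$.

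The main difficulty is that the cohomological dichotomy is only suggestive: a $\mathbb{Z}_2$ summand of $\pi_4$ need not be visible in mod-$2$ cohomology below degree $5$, and $H^5(M'_{k,l};\mathbb{Z}_2)\cong\mathbb{Z}_2$ is nonzero even when $l$ is odd, so $\pi_4(M'_{k,l})\cong\mathbb{Z}_2$ cannot be excluded by a purely formal argument. One therefore has to pin down the relevant part of the attaching maps of the top handles of $M'_{k,l}$ and check directly that $\eta\in\pi_4(S^3)$ is killed when $l$ is odd --- equivalently, rule out the nonzero class in $H^5$ of the third Postnikov stage of $M'_{k,l}$ that a surviving $\mathbb{Z}_2$ in $\pi_4$ would force as its $k$-invariant. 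Alternatively one may try to produce, in every residue class modulo $6$ and for every odd $l$, a representative whose bundle fibers so as to exhibit a partial section killing $\eta$, which reduces to a congruence condition on $k$ via \eqref{cc}. Either way, the crux is exactly this last step; the surrounding bookkeeping is routine.
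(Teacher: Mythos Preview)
Your proposal is not a proof: you reduce via Theorem~\ref{homotopy} to six residue classes of $k$ for each fixed odd $l$, but then explicitly concede that ``the crux is exactly this last step'' of actually showing $\eta\in\pi_4(S^3)$ dies in $M'_{k,l}$, and this step is never carried out. The partial arguments you offer do not close the gap. The complex-structure trick only applies when $4k+1$ is a perfect square, hence only for $k\equiv 0$ or $2\pmod 6$, and even there it merely trades $\pi_4(M'_{k,l})$ for $\pi_4$ of a $\mathbb{C}P^1$-bundle over $\mathbb{C}P^2$, which you have not computed in general; your one concrete example $T^1\mathbb{C}P^2\cong M'_{2,3}$ settles a single $(k,l)$. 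The cohomological observation about $H^3$ and $H^4$ with $\mathbb{Z}_2$ coefficients is, as you yourself note, only suggestive and cannot by itself rule out a $\mathbb{Z}_2$ in $\pi_4$. So as written there is no argument covering the residues $k\equiv 1,3,4,5\pmod 6$, nor a uniform treatment over all odd $l$.

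The paper's proof avoids all direct homotopy-theoretic computation and instead argues by contradiction using the Kreck--Stolz invariant $s_2$. A theorem of Kruggel asserts that if $\pi_4(M'_{k,l})\cong\mathbb{Z}_2$ then any type-$l$ manifold $M$ homotopy equivalent to $M'_{k,l}$ must satisfy $l\bigl(s_2(M)-s_2(M'_{k,l})\bigr)=0$ in $\mathbb{Q}/\mathbb{Z}$. But Theorem~\ref{homotopy} says $M'_{k+6,l}\simeq M'_{k,l}$, and from the explicit formula $s_2(M'_{k,l})=-\tfrac{2k-l+1}{24l}$ one computes $l\bigl(s_2(M'_{k+6,l})-s_2(M'_{k,l})\bigr)=\tfrac{1}{2}\ne 0$ in $\mathbb{Q}/\mathbb{Z}$, a contradiction. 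Thus the paper turns the problem into a single numerical check, bypassing the attaching-map analysis you were attempting.
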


We will prove Theorem \ref{homotopy} in  Section \ref{3} using the PL-homeomorphism classification in Theorem \ref{result} and surgery theory.

When $l=0$, the cohomology rings of both $M_{k,l}$ and $M_{k,l}^{\prime}$ are isomorphic to $H^*(\mathbb{C}P^2\times S^3)$. The classification
results are obtained in \cite{wxq}. For completeness we present the results here.

\begin{theorem}[Theorem 3.1 of \cite{wxq}]
\

    \begin{enumerate}[1]
        \item  Two manifolds $M_{k,0}$ and $M_{k^{\prime},0}$ are homeomorphic, PL-homeomorphic or diffeomorphic if and only if $k=k^{\prime}$;
        \item  Two manifolds $M_{k,0}$ and $M_{k^{\prime},0}$ are homotopy equivalent if and only if $k\equiv k^{\prime} \pmod{ 6}$.
    \end{enumerate}

    These two statements also holds when $M_{k,0}$ and $M_{k^{\prime},0}$ are replaced by $M^{\prime}_{k,0}$ and $M^{\prime}_{k^{\prime},0}$, respectively.

\end{theorem}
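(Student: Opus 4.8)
Here is the approach I would take.

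\emph{Smooth, PL and topological classification.} These all reduce to the first Pontrjagin class. From $TM_{k,0}\oplus\varepsilon^1\cong\pi^*(T\mathbb{C}P^2\oplus\xi_{k,0})$ together with the Gysin sequence --- which, since $e(\xi_{k,0})=0$, makes $\pi^*\colon H^4(\mathbb{C}P^2)\xrightarrow{\cong}H^4(M_{k,0})$ carry $x^2$ to the square $u^2$ of a generator $u$ of $H^2(M_{k,0})\cong\mathbb{Z}$ --- one computes $p_1(M_{k,0})=(4k+3)u^2$, and from (\ref{cc}) likewise $p_1(M'_{k,0})=4(k+1)u^2$. Rational Pontrjagin classes are invariants of homeomorphisms (Novikov) and of PL- and diffeomorphisms, and as $H^4$ is torsion free these integral classes are invariants here too; any such equivalence sends a generator of $H^2$ to $\pm$ itself and hence fixes $u^2$, so it must fix the coefficient $4k+3$ (resp.\ $4k+4$). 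Therefore $k=k'$, and the converse is immediate.

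\emph{Homotopy classification: necessity.} I would use the Spivak normal fibration, together with the facts that $\widetilde{KO}(\mathbb{C}P^2)\cong\mathbb{Z}$ is generated by a class $\zeta$ with $p_1(\zeta)=x^2$ (so $p_1$ is injective on it) and that $[\xi_{k,0}]=4k\zeta$. Since $e(\xi_{k,0})=0$, the bundle projection admits a section $s\colon\mathbb{C}P^2\to M_{k,0}$, and $\pi s=\mathrm{id}$ gives $s^*[\nu(M_{k,0})]=-[T\mathbb{C}P^2]-[\xi_{k,0}]$ in $\widetilde{KO}(\mathbb{C}P^2)$. Given a homotopy equivalence $f\colon M_{k,0}\to M_{k',0}$, put $g=\pi'fs\colon\mathbb{C}P^2\to\mathbb{C}P^2$; tracking $H_4$ through $s_*,f_*,\pi'_*$ (all isomorphisms) shows $g$ has degree $\pm1$, so $g^*=\mathrm{id}$ on $H^4$ and hence, by $p_1$-injectivity, on all of $\widetilde{KO}(\mathbb{C}P^2)$. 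Combining these identities yields $s^*\big([f^*\nu(M_{k',0})]-[\nu(M_{k,0})]\big)=4(k-k')\zeta$; since $f$ is a homotopy equivalence the two Spivak fibrations correspond, so this class lies in $\ker\big(J\colon\widetilde{KO}(\mathbb{C}P^2)\to\widetilde{J}(\mathbb{C}P^2)\big)$. Using $2\zeta=q^*(\text{generator of }\widetilde{KO}(S^4))$, the classical fact that $J\colon\widetilde{KO}(S^4)\to\widetilde{J}(S^4)=\mathbb{Z}/24$ is onto, and the exact sequences of $\widetilde{KO}(-)$ and $\widetilde{J}(-)$ applied to the cofiber sequence $\mathbb{C}P^1\to\mathbb{C}P^2\xrightarrow{q}S^4\xrightarrow{\eta}S^3$ with $\eta^3=12\nu$ in $\pi_3^S$, one finds $\ker J=24\mathbb{Z}\cdot\zeta$. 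Hence $24\mid4(k-k')$, i.e.\ $k\equiv k'\pmod6$. The argument for $M'_{k,0}$ is the same, since $p_1(M'_{k,0})-p_1(M'_{k',0})$ is again $4(k-k')u^2$.

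\emph{Homotopy classification: sufficiency.} I would prove that $k'\equiv k\pmod6$ forces $\xi_{k,0}$ and $\xi_{k',0}$ to have the same fiber homotopy type over $\mathbb{C}P^2$, whence their total spaces are homotopy equivalent; by iteration it suffices to take $k'=k+6$. Both bundles are pulled back along $q$ from the $S^3$-bundles $k\alpha$ and $(k+6)\alpha$ over $S^4$, so their classifying maps in $[\mathbb{C}P^2,BSG_4]$ differ by the image under $q^*$ of $6\alpha\in\pi_4(BSG_4)$. Since $S^3=SU(2)$ is an H-space, every Whitehead product into $S^3$ vanishes and the connecting maps of the evaluation fibration $SG(S^3)\to S^3$ are zero; this gives $\pi_4(BSG_4)=\pi_3\big(SG(S^3)\big)\cong\mathbb{Z}\oplus\mathbb{Z}/12$, with the free summand detected by the Euler class --- so $\alpha$ (with $e=0$) lies in the $\mathbb{Z}/12$-summand --- and the torsion coming from $\pi_6(S^3)$. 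Thus $6\alpha$ is a $2$-torsion element of $\pi_4(BSG_4)$, i.e.\ $0$ or the unique order-$2$ element. On the other hand, exactness of the above cofiber sequence applied to the (simply connected) target $BSG_4$ identifies the indeterminacy of $q^*$ with $\mathrm{Im}\big(\eta^*\colon\pi_3(BSG_4)\to\pi_4(BSG_4)\big)$; as $\pi_3(BSG_4)=\pi_2(SG(S^3))=\pi_5(S^3)=\mathbb{Z}/2$ and this map is nonzero (again seen by stabilizing, using $\eta^3=12\nu$), its image is exactly the $2$-torsion subgroup of $\pi_4(BSG_4)$. Therefore $q^*(6\alpha)$ is trivial, $\xi_{k,0}$ and $\xi_{k+6,0}$ have the same fiber homotopy type, and $M_{k,0}\simeq M_{k+6,0}$. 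For $M'_{k,0}$ the argument is entirely parallel, with the wedge $\mathbb{C}P^2\vee S^4$ and the pinch map $p$ in place of $S^4$ and $q$.

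\emph{The main obstacle} is the unstable computation in the last step. Stably, the obstruction is the image of $6\alpha$ in $\pi_4(BSG)=\pi_3^S=\mathbb{Z}/24$, which is the nonzero element of order $2$ --- precisely the reason $M_{k,0}$ and $M_{k+6,0}$ fail to be PL-homeomorphic (the first part of the theorem) --- so the homotopy statement cannot be obtained from stable data alone. What rescues it is that this order-$2$ obstruction is killed by $q^*$ (ultimately because $\eta^3=12\nu$), and verifying this needs the unstable homotopy of $SG(S^3)$ together with careful use of the cofiber sequence $\mathbb{C}P^1\to\mathbb{C}P^2\to S^4\to S^3$ for the non-infinite-loop space $BSG_4$. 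An alternative route to sufficiency is to run the surgery exact sequence of $M_{k,0}$ as in the proof of Theorem \ref{homotopy}; but identifying which classes of the structure set are realized by the $M_{k',0}$ rests on essentially the same bundle-theoretic computation, so the difficulty is not genuinely avoided.
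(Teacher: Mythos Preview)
The paper does not give its own proof of this theorem: it is quoted from \cite{wxq} and stated ``for completeness''. That said, the sufficiency direction of the homotopy classification is contained in the paper's Lemma~\ref{fiber} (valid for all $l$), so one can compare your sufficiency argument with that.

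Your Part~1 and your necessity argument are fine and self-contained. Your sufficiency argument for $M_{k,0}$ is essentially the paper's proof of the first half of Lemma~\ref{fiber}: the bundles $\xi_{k,0}$ and $\xi_{k+6,0}$ are both in the image of $q^*\colon\pi_4(BSG(4))\to[\mathbb{C}P^2,BSG(4)]$, and the Puppe sequence identifies the fibres of $q^*$ with the cosets of $\mathrm{Im}\big((\Sigma h)^*\big)$, which contains $6i_*\alpha$.

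The genuine gap is in the last sentence, where you say the $M'_{k,0}$ case is ``entirely parallel''. It is not. The classifying map of $\xi'_{k,0}$ has $w_2\neq 0$, hence does \emph{not} factor through $q$; it lies outside $\mathrm{Im}(q^*)$. The Puppe argument only tells you that $\mathrm{Im}\big((\Sigma h)^*\big)$ is the stabilizer of the \emph{basepoint} under the $\pi_4(BSG(4))$-action on $[\mathbb{C}P^2,BSG(4)]$; it says nothing about the stabilizer of the non-basepoint element $i_*(\gamma\oplus\varepsilon^2)$. What you actually need is that the order-$2$ element $6i_*\alpha\in\pi_4(BSG(4))$ fixes $i_*(\gamma\oplus\varepsilon^2)$, and since $BSG(4)$ is not an $H$-space there is no formal reason for all stabilizers to coincide. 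Replacing $q$ by the pinch $p\colon\mathbb{C}P^2\to\mathbb{C}P^2\vee S^4$ does not help: $p$ is not a map in a cofibre sequence, and ``the indeterminacy of $p^*$'' has no direct Puppe-type description.

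This is exactly why the paper's proof of Lemma~\ref{fiber} treats the $\xi'$ case separately and at length: it first passes to $BSG$, where $[\mathbb{C}P^2,BSG]\cong\mathbb{Z}_{24}$ and $p_1$ controls everything (so stable fibre homotopy equivalence is immediate from $p_1(\xi'_{k+6,l})\equiv p_1(\xi'_{k,l})\pmod{24}$), and then lifts the resulting homotopy $f_t\colon\mathbb{C}P^2\times I\to BSG$ back to $BSG(4)$ skeleton by skeleton, using an obstruction computation in $\pi_4(BSG(4))\otimes\mathbb{Q}$ together with the fact that $\gamma\oplus\varepsilon^2$ factors through $BSO(3)$. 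Your proposal would need either this argument or a direct computation of the relevant stabilizer to close the gap.
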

\vspace{2pt}

\section{Kreck-Stolz Invariants and PL Classification}\label{tool}
In the remaining part of this paper we always assume that  $l$ is nonzero. 
The smooth and homeomorphism classifications of the manifolds of type $l$ are obtained by Kreck-Stolz in \cite{kreck_2}. In this section we give a summary of the invariants which they used to achieved the classification, and show that after obvious modifications, the same approach also applies to the PL classification problem. 

Suppose $(M,u)$ is a  manifold of 
type $l$, the invariants are defined as certain characteristic numbers of a pair $(W,z)$ whose boundary is $(M,u)$. That is, we
have the following:

\begin{enumerate}
    \item $W$ is a smooth $8$-manifold with $\partial W=M$;
    \item $z\in H^2(W,\mathbb{Z})$ such that $z|_M =u$;
    \item $W$ is spin (nonspin) if $M$ is spin (nonspin);
    \item In the nonspin case, $w_2(W) \equiv z \pmod{ 2}$.
\end{enumerate}

Notice that $j^*:H^4(W,M;\mathbb{Q}) \to H^4(W;\mathbb{Q})$ is an isomorphism. Therefore the cohomology classes $p_1(W)^2$, $p_1(W) z^2$ and $z^4$ can be regarded as elements in $H^8(W,M;\mathbb Q)$, and we abbreviate $p_1^2$, $p_1z^2$ and $z^4$ for the evaluations $\langle p_1(W)^2, [W,M] \rangle$, $\langle p_1(W) z^2, [W,M] \rangle$ and $\langle z^4, [W,M] \rangle$, respectively. The charcteristic numbers $S_i(W,z)\in \mathbb{Q}$ are defined as follows:
\begin{align*}
    \text{Spin}&\ \text{case:}\\
    & S_1(W,z)=-\frac{1}{2^5\cdot 7}\text{sign}(W)+\frac{1}{2^7\cdot7}p_1^2,\\
            & S_2(W,z)=-\frac{1}{2^4\cdot3}z^2p_1+\frac{1}{2^3\cdot3}z^4,\\
            &   S_3(W,z)=-\frac{1}{2^2\cdot3}z^2p_1+\frac{2}{3}z^4\\
    \text{Nonspin}&\ \text{case:}\\
        & S_1(W,z)=-\frac{1}{2^5\cdot7}\text{sign}(W)+\frac{1}{2^7\cdot7}p_1^2-\frac{1}{2^6\cdot3}z^2p_1+\frac{1}{2^7\cdot3}z^4\\
        &S_2(W,z)=-\frac{1}{2^3\cdot3}z^2p_1+\frac{5}{2^3\cdot3}z^4\\
       &S_3(W,z)=-\frac{1}{2^3}z^2p_1+\frac{13}{2^3}z^4
\end{align*}

Define $\bar{S_i}(W,z)$ as follows: $\bar{S_1}(W,z)=28S_1(W,z)$, $\bar{S_2}(W,z)=S_1(W,z)$, and $\bar{S_3}(W,z)=S_3(W,z)$.

Define $s_i(M)\in \mathbb{Q}/\mathbb{Z}$ by $S_i(W,z)\pmod{\mathbb{Z}}$, and $\bar{s_i}(M)\in \mathbb{Q}/\mathbb{Z}$ by $\bar{S_i}(W,z)\pmod{\mathbb{Z}}$.
Kreck and Stolz's classification of manifolds of type $l$ is as follows.
\begin{theorem}[Theorem 1 of \cite{kreck1998correction}]\label{smoothtop}
    Let $M$ and $M^{\prime}$ be two smooth manifolds of type $l$ which are both spin or both nonspin. Then $M$ is diffeomorphic (hemeomorphic) to 
    $M^{\prime}$ if and only if $s_i(M)=s_i(M^{\prime})$ (resp. $\bar{s_i}(M)=\bar{s_i}(M^{\prime})$) for $i=1,\ 2,\ 3.$
    \vspace*{2pt}
\end{theorem}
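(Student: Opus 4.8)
The plan is to follow Kreck's modified surgery, which is the method Kreck and Stolz used. I split the argument into well-definedness together with necessity, and then sufficiency.

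First I would establish that $s_i$ and $\bar s_i$ depend only on $(M,u)$. Given two admissible pairs $(W_0,z_0)$, $(W_1,z_1)$ with $\partial(W_j,z_j)=(M,u)$, glue along $M$ to obtain the closed $8$-manifold $X=W_0\cup_M(-W_1)$; since $H^1(M;\mathbb Z)=0$ the classes $z_0,z_1$ patch to a class $z\in H^2(X;\mathbb Z)$, and $X$ is a closed spin manifold in the spin case and a closed manifold with a $\mathrm{Spin}^c$-structure of determinant class $z$ in the nonspin case (here one uses $w_2(W_j)\equiv z_j\bmod 2$). Let $L$ be the complex line bundle with $c_1(L)=z$. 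The indices of the Dirac, resp.\ $\mathrm{Spin}^c$-Dirac, operator twisted by $L^{\otimes n}$ for $n=0,1,2$ are integers, and by the Atiyah--Singer index theorem they equal explicit rational linear combinations of the characteristic numbers $p_1(X)^2$, $p_1(X)z^2$, $z^4$; together with the Hirzebruch signature theorem for $\mathrm{sign}(X)$, these four integrality relations are, up to an invertible integral change of basis, exactly the combinations $S_1,S_2,S_3$ --- which is precisely why those denominators occur. Since the signature (Novikov additivity) and the characteristic numbers are additive over the gluing, $S_i(W_0,z_0)-S_i(W_1,z_1)=S_i(X,z)\in\mathbb Z$, so $s_i$ and $\bar s_i$ are well defined in $\mathbb Q/\mathbb Z$. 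That every manifold of type $l$ bounds such a pair follows from the vanishing of the relevant $7$-dimensional bordism group: $\widetilde\Omega^{\mathrm{Spin}}_7(\mathbb C P^\infty)=0$ in the spin case (in the Atiyah--Hirzebruch spectral sequence the only term in total degree $7$ is $E^2_{6,1}\cong\mathbb Z_2$, and it is killed by the differential from $E^2_{8,0}$ since $\mathrm{Sq}^2$ acts nontrivially on $H^6(\mathbb C P^\infty;\mathbb Z_2)$), and the corresponding twisted spin (or $\mathrm{Spin}^c$) bordism group in the nonspin case, which vanishes for degree reasons. Necessity is then immediate: an orientation- and $u$-preserving diffeomorphism $M\to M'$ lets one use a single coboundary for both sides, so $s_i(M)=s_i(M')$; in the topological case one passes to $\bar s_i$ because changing the smooth structure on a coboundary alters $\mathrm{sign}$ only by multiples of $|bP_8|=28$ while leaving the other characteristic numbers unchanged, so $\bar s_1=28s_1$ and the remaining $\bar s_i$ (the signature-free combinations) are already homeomorphism invariants.

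For sufficiency, assume $s_i(M)=s_i(M')$ (resp.\ $\bar s_i(M)=\bar s_i(M')$) for all $i$. Since $M$ and $M'$ are $1$-connected with $H^2\cong\mathbb Z$, they have the same normal $2$-type $B$: the fibration over $BSO$ (equivalently over $BSTOP$, which has the same $2$-type) which in the spin case is the trivial one $BSpin\times\mathbb C P^\infty$ and in the nonspin case is the nontrivial fibration encoding $w_2\equiv u\bmod 2$ (a $\mathrm{Spin}^c$-type). Each manifold carries a normal $B$-structure --- the Gauss map of $\nu$ lifted via $u$ --- and the degree-$7$ $B$-bordism group vanishes ($\Omega^{\mathrm{Spin}}_7(\mathbb C P^\infty)=0$, resp.\ its twisted analogue), so $M$ and $M'$ are $B$-bordant; fix a $B$-bordism $W^8$ between them. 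Now apply Kreck's modified surgery: perform surgeries in the interior of $W$ to make the inclusions $M\hookrightarrow W\hookleftarrow M'$ and the map $W\to B$ as highly connected as the normal $2$-type permits. Since $\dim W=8=2\cdot3+2$, the obstruction to completing the surgery so that $W$ becomes an $s$-cobordism lies in Kreck's monoid $l_8(\mathbb Z[\pi_1B])=l_8(\mathbb Z)$; as $\pi_1B=1$ this obstruction is the class of the (even, symmetric) surgery kernel form, detected by its signature. Boundary connected sum of $W$ with closed $B$-manifolds changes the obstruction by the image of $\Omega_8(B)$ under the surgery-obstruction map, and varying the two $B$-structures and the coboundary changes it by a further computable amount; the residual value, an invariant of the pair $(M,M')$, is exactly what the characteristic numbers packaged into the $s_i$, resp.\ $\bar s_i$, measure. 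In the topological category the group of available modifications is larger, reflecting Freedman's topological $E_8$-manifold (equivalently the extra factor $bP_8\cong\mathbb Z_{28}$), which is why the coarser $\bar s_i$ suffice there. Hence equality of the invariants forces the obstruction to vanish for a suitable choice of data, $W$ becomes an $s$-cobordism, and the $s$-cobordism theorem in dimensions $\ge 6$ (Smale in $\mathrm{DIFF}$, Kirby--Siebenmann in $\mathrm{TOP}$) yields an orientation- and $u$-preserving diffeomorphism (resp.\ homeomorphism) $M\cong M'$.

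\textbf{The step I expect to be the main obstacle} is the last computational one: identifying $\Omega_8(B)$ together with the surgery-obstruction (signature) map on it, working out how the $S_i$ transform under a change of normal $B$-structure, and thereby determining exactly which residues of the modified-surgery obstruction can be killed. This bookkeeping is what both dictates the precise denominators $2^5\cdot 7$, $2^4\cdot 3$, $2^3\cdot 3,\dots$ in the definition of the $S_i$ and accounts for the gap between the smooth ($s_i$) and topological ($\bar s_i$) classifications.
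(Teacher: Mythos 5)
This statement is imported verbatim from Kreck--Stolz (it is labelled ``Theorem 1 of \cite{kreck1998correction}'') and the paper offers no proof of it, so there is nothing internal to compare your argument against. Your sketch does faithfully reproduce the architecture of the original Kreck--Stolz proof: well-definedness of the $s_i$ via Novikov additivity plus the integrality of $\hat{A}$-type characteristic numbers on closed spin (resp.\ $\mathrm{Spin}^c$) $8$-manifolds (indeed $S_1(X,z)=\hat{A}(X)$ for closed spin $X$, which fixes the denominators $2^5\cdot 7$ and $2^7\cdot 7$); existence of coboundaries from $\widetilde\Omega^{\mathrm{Spin}}_7(\mathbb{C}P^\infty)=0$ and its twisted analogue; and sufficiency via Kreck's modified surgery over the normal $2$-type $B\mathrm{Spin}\times\mathbb{C}P^\infty$ (resp.\ the $\mathrm{Spin}^c$-twisted fibration), with the obstruction in $l_8(\mathbb{Z})$ and the homeomorphism statement obtained by allowing the extra $bP_8\cong\mathbb{Z}_{28}$ ambiguity. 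The one genuine gap is the step you yourself flag: the computation of $\Omega_8(B)$, of the surgery-obstruction map on it, and of how the $S_i$ vary under change of $B$-structure and coboundary; this is the substance of the Kreck--Stolz paper and cannot be waved through, but as a blind reconstruction of a cited external result your outline is sound and correctly locates where the real work lies.
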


By smoothing theory, if a closed oriented $7$-dimensional PL manifold $M$ adimits a smoothing, then it adimits exactly $28$ smoothings up to concordance \cite{smoothing}.
Moreover, every concordant class can be represented by $M\# \Sigma$, where $\Sigma$ is an exotic sphere. 
Therefore, we could know $M^{\prime}$ is $PL$-homeomorphic to $M$ if and only if $M^{\prime}$ is diffeomorphic to $M\# \Sigma$ for some exotic
sphere $\Sigma$.
Consider 8-dimensional Milnor manifold $M^8(E_8)$ which is a smooth parallelizable manifold with signature $8$. 
$\partial M^8(E_8)$ is a generator of $\Theta_7$, say $\Sigma_1$. We have $s_1(\Sigma_1)=1/28$, $s_2(\Sigma_1)=s_3(\Sigma_1)=0$. 
Therefore, we have the following
theorem:
\begin{theorem}\label{PL}
    Let $M$ and $M^{\prime}$ be two smooth manifolds of type $l$ which are both spin or both nonspin. Then $M$ is PL-homeomorphic to 
    $M^{\prime}$ if and only if $28s_1(M)=28s_1(M^{\prime})$, and  $s_i(M)=s_i(M^{\prime})$ for $i=2,\ 3.$
    \vspace*{2pt}
\end{theorem}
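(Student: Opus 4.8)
The plan is to deduce Theorem \ref{PL} from the smooth classification of Theorem \ref{smoothtop} together with smoothing theory in dimension $7$, the point being that the group $\Theta_7\cong\mathbb{Z}/28$ of homotopy $7$-spheres exactly measures the difference between the PL and the smooth classifications of these manifolds.

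First I would record the reduction sketched just before the statement: for closed oriented smooth $7$-manifolds $M$ and $M'$, one has that $M$ is PL-homeomorphic to $M'$ if and only if $M'$ is diffeomorphic to $M\#\Sigma$ for some $\Sigma\in\Theta_7$. Indeed, every smooth homotopy $7$-sphere is PL-homeomorphic to $S^7$, so $M\#\Sigma$ carries the same PL structure as $M$; conversely, the concordance classes of smoothings of a fixed PL $7$-manifold form a torsor over $\Theta_7$, with $\Sigma$ sending the class of $M$ to that of $M\#\Sigma$. Writing $\Theta_7=\langle\Sigma_1\rangle$ with $\Sigma_1=\partial M^8(E_8)$, every $\Sigma$ is the $j$-fold connected sum $j\Sigma_1$ for some $j\in\mathbb{Z}/28$ (negative $j$ meaning orientation-reversed copies). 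I would also observe that $M\#\Sigma$ is again a manifold of type $l$ with the same preferred generator $u\in H^2$, since connected sum with a homotopy $7$-sphere alters neither the cohomology ring nor its generators; hence Theorem \ref{smoothtop}, and the invariants $s_i$, apply to $M\#\Sigma$.

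The key step is the additivity of the Kreck--Stolz invariants under this operation, namely
\[
 s_i(M\#\Sigma_1)=s_i(M)+s_i(\Sigma_1),\qquad s_i(M\# j\Sigma_1)=s_i(M)+j\,s_i(\Sigma_1),\qquad i=1,2,3,
\]
where $s_i(\Sigma_1)$ is computed from any spin coboundary of $\Sigma_1$ with vanishing $H^2$. To prove this I would take a bounding pair $(W,z)$ for $(M,u)$ as in the definition of $S_i$ and form the boundary connected sum $W\natural M^8(E_8)$, which has boundary $M\#\Sigma_1$ and carries the class $z$ extended by $0$ (consistently, since $M^8(E_8)$ is $3$-connected, parallelizable, hence spin, and contributes nothing to $H^2$ or $w_2$). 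Its signature is $\operatorname{sign}(W)+8$, and the relative characteristic numbers $p_1^2$, $z^2p_1$, $z^4$ are additive under boundary connected sum, the $M^8(E_8)$-summand contributing nothing because $p_1(M^8(E_8))=0$ and $z$ vanishes on it. Substituting into the formulas for the $S_i$ gives the additivity modulo $\mathbb{Z}$, and the same computation applied to $M^8(E_8)$ itself yields the values $s_1(\Sigma_1)=\tfrac{1}{28}$, $s_2(\Sigma_1)=s_3(\Sigma_1)=0$ recorded above.

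Finally I would assemble the pieces. Combining the reduction with Theorem \ref{smoothtop} and the additivity, $M$ is PL-homeomorphic to $M'$ if and only if there is an integer $j$ with
\[
 s_1(M')=s_1(M)+\tfrac{j}{28}\ \ \text{in}\ \mathbb{Q}/\mathbb{Z},\qquad s_2(M')=s_2(M),\qquad s_3(M')=s_3(M).
\]
Such a $j$ exists precisely when $s_1(M)-s_1(M')$ lies in the subgroup $\tfrac{1}{28}\mathbb{Z}/\mathbb{Z}$ of $\mathbb{Q}/\mathbb{Z}$, equivalently when $28\,s_1(M)=28\,s_1(M')$ in $\mathbb{Q}/\mathbb{Z}$; this is the asserted criterion. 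I expect the main obstacle to be the additivity step of the previous paragraph --- verifying carefully that the relative Pontrjagin and Euler characteristic numbers add under boundary connected sum and that the Milnor piece contributes only its signature --- together with checking that $M\#\Sigma$ stays within the class of manifolds of type $l$, so that all the invariants are defined and Theorem \ref{smoothtop} genuinely applies.
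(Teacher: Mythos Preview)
Your proposal is correct and follows essentially the same route as the paper: reduce PL-homeomorphism to diffeomorphism with $M\#\Sigma$ for some $\Sigma\in\Theta_7$ via smoothing theory, then invoke Theorem~\ref{smoothtop} together with the values $s_1(\Sigma_1)=1/28$, $s_2(\Sigma_1)=s_3(\Sigma_1)=0$ on the Milnor generator. The paper's argument is the terse paragraph preceding the statement; your version supplies the omitted details, in particular the boundary-connected-sum verification of the additivity $s_i(M\#\Sigma_1)=s_i(M)+s_i(\Sigma_1)$ and the observation that $M\#\Sigma$ remains a manifold of type $l$, both of which the paper leaves implicit.
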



\begin{proof}[Proof of Theorem \ref{result}]
    Let $W_{k,l}$ be the disk bundle associated to $\xi_{k,l}$ , and let $\pi_{k,l}: W_{k,l}\rightarrow \mathbb{C}P^2$ be the projection.
    We have $\tau(W_{k,l})\cong \pi_{k,l}^*(\tau(\mathbb{C}P^2)\oplus \xi_{k,l})$.  
    Let $z=\pi_{k,l}^*x$, recall that $x$ is a generator of $H^2(\mathbb{C}P^2;\mathbb{Z})$, then 
    \begin{align*}
        p_1(W_{k,l})=&\pi_{k,l}^*(p_1(\mathbb{C}P^2)+p_1(\xi_{k,l}))\\
            =&(4k-2l+3)\pi_{k,l}^*w_{\mathbb{C}P^2}\\
            =&(4k-2l+3)z^2.
    \end{align*}

    Let $y$ be a generator of $H^4(W_{k,l},M_{k,l};\mathbb{Z})$ such that $j^*(y)=lz^2$. 
    Clearly, $y$ is the Thom class of $\xi_{k,l}$.
    The orientation of $W_{k,l}$ is defined by equation 
    $\langle z^2\cup y, \left[W_{k,l},M_{k,l}\right] \rangle =1$.

    Since $\langle y\cup j^*y, \left[W_{k,l},M_{k,l}\right]\rangle = l$, sign($W$) equals to the sign of $l$.
    By definition, we have
    \begin{align*}
        z^4 &= \langle z^2\cup j^{*-1}z^2, \left[W_{k,l},M_{k,l}\right] \rangle = \frac{1}{l}\langle z^2\cup y, \left[W_{k,l},M_{k,l}\right] \rangle = \frac{1}{l},\\
        z^2p_1 &= \frac{4k-2l+3}{l},\\
        p_1^2 &= \frac{(4k-2l+3)^2}{l}.
    \end{align*} 
    Then $s_i$ is given by
    \begin{align*}
        s_1 &= -\frac{\text{sign}(l)}{2^5\cdot7}+\frac{(4k-2l+3)^2}{2^7\cdot7l}-\frac{8k-4l+5}{2^7\cdot3l},\\
        s_2 &= -\frac{2k-l-1}{12l},\\
        s_3 &= -\frac{2k-l-5}{4l}.
    \end{align*}

    Similarly, let $\pi^{\prime}_{k,l}: W_{k,l}^{\prime}\rightarrow \mathbb{C}P^2$ be the projection. We have 
    \begin{align*}
        z^4 &= \frac{1}{l},\\
        z^2p_1 &= \frac{4k-2l+4}{l},\\
        p_1^2 &= \frac{(4k-2l+4)^2}{l}.
    \end{align*}
    And $s_i$ is given by
    \begin{align*}
        s_1 &= -\frac{\text{sign}(l)}{2^5\cdot7}+\frac{(2k-l+2)^2}{2^5\cdot7l},\\
        s_2 &= -\frac{2k-l+1}{24l},\\
        s_3 &= -\frac{2k-l-2}{6l}.
    \end{align*}

   Then Theorem \ref{result} follows by comparing the invariants.
\end{proof}

\begin{proof}[Proof of Corollary \ref{cor:hmtpgp}]
Let $l$ be an odd integer. If $\pi_4(M'_{k,l}) = \mathbb Z_2$ for some $k$, by a theorem of Kruggel \cite[Theorem 0.1]{kruggel_2}, the $s_2$-invariant of any type $l$ manifold $M$ which is homotopy equivalent to $M'_{k,l}$ satisfies  $l(s_2(M)-s_2(M'_{k,l}))=0 \in \mathbb Q /\mathbb Z$.  By Theorem \ref{homotopy}, $M'_{k+6,l}$ is homotopy equivalent to $M_{k,l}$. But the above calculation shows that $l(s_2(M'_{k+6,l})-s_2(M'_{k,l}))=1/2 \in \mathbb Q / \mathbb Z$, a contradiction.
\end{proof}

\section{Fiber homotopy equivalences}\label{3}

\begin{lemma}\label{fiber}
    There exist fiber homotopy equivalences of spherical fibrations $f_j:M_{k+6j,l}\rightarrow M_{k,l}$ and $f_j^{\prime}:M_{k+6j,l}^{\prime}\rightarrow M_{k,l}^{\prime}$
    for all $j\in \mathbb{Z}$.
\end{lemma}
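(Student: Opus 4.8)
The plan is to reduce the statement to a computation in the group of fiber homotopy self-equivalences of the trivial $S^3$-fibration over $\mathbb{C}P^2$, which is controlled by the $J$-homomorphism. The sphere bundles $M_{k,l}$ and $M_{k',l}$ are the sphere bundles of $\xi_{k,l}$ and $\xi_{k',l}$; by the parametrization recalled in the introduction, these two rank-$4$ bundles differ by the pullback under $q\colon \mathbb{C}P^2 \to S^4$ of the bundle $(k-k')\alpha + 0\cdot\beta$ over $S^4$ (the $\beta$-coordinate is fixed because it records the Euler class, hence $l$). Concretely, $\xi_{k',l}$ is the fiberwise connected/Whitney sum modification of $\xi_{k,l}$ by $q^*((k-k')\alpha)$, and since $\alpha$ has Euler class $0$ its sphere bundle is fiber homotopy trivial over $S^4$ if and only if its class dies under the stable $J$-homomorphism $J\colon \pi_4(BSO(4)) \to \pi_4(BG(4)) \to \pi_3^s$-type target; the relevant fact is $J(\alpha)$ has order dividing the numerator/denominator constant that produces the ``$6$''. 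So the first step is to make precise that $M_{k+6j,l}$ and $M_{k,l}$ have fiber homotopy equivalent sphere bundles exactly when $q^*(6j\,\alpha)$ is fiber homotopically trivial as a spherical fibration over $\mathbb{C}P^2$.

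Second, I would run the obstruction-theoretic / cofiber-sequence argument over $\mathbb{C}P^2$ itself rather than over $S^4$. Applying $[-, BG(4)]$ (classifying space of $S^3$-fibrations, $BG(4) = B\mathrm{aut}(S^3)$) to the cofiber sequence (\ref{seq:1}) gives, since $\pi_3(BG(4))$ is finite and the relevant map out of it is understood, a description of $[\mathbb{C}P^2, BG(4)]$ analogous to (\ref{seq:2}); the image of $q^*\colon \pi_4(BG(4)) \to [\mathbb{C}P^2, BG(4)]$ is what measures the ambiguity. I would compute $\pi_4(BG(4)) \cong \pi_3(G(4)) \cong \pi_3(\mathrm{aut}(S^3))$ and the image of the fiberwise-$J$ map $\pi_4(BSO(4)) \to \pi_4(BG(4))$ on the generator $\alpha$. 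The classical computation (via $\pi_3^s \cong \mathbb{Z}/24$ and the order-$24$ image of $J$ in dimension $3$, together with the unstable correction coming from $\pi_3(SO(4)) \to \pi_3(G(4))$) should show that the $S^3$-fibration of $q^*(m\alpha)$ over $S^4$ is trivial iff $24 \mid m$ or some such divisor, and then restricting along $\mathbb{C}P^2 \to S^4$ — which kills the part of $\pi_4$ detected on the top cell modulo the attaching map $h$, i.e.\ modulo the Hopf element $\eta$ — cuts the relevant order down to $6$. This ``$24 \to 6$'' reduction, matching the $6$ in Theorem \ref{homotopy} and the $6l \to 6$ jump from Theorem \ref{result}, is the arithmetic heart of the lemma.

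Third, I would treat the nonspin family $M'_{k,l}$ in parallel: $\xi'_{k',l}$ differs from $\xi'_{k,l}$ by $p^*$ of $(k-k')\alpha$ on the $S^4$-wedge summand, the $\gamma \oplus \varepsilon^2$ summand being untouched, so the identical cofiber-sequence / $J$-homomorphism analysis applies with $p\colon \mathbb{C}P^2 \to \mathbb{C}P^2 \vee S^4$ in place of $q$; the collapse map $p$ factors the relevant cell structure the same way, so the same divisor $6$ appears. Thus one map $f_j$ and one map $f'_j$ are constructed for $j=1$, and general $j$ follows by composing, since fiber homotopy equivalences compose and the construction is additive in $j\alpha$.

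The main obstacle I expect is making the fiberwise $J$-homomorphism computation genuinely rigorous and unstable: $BG(4)$ (rather than $BG = BF$) is the correct classifying space for $S^3$-fibrations, and one must control $\pi_4(BG(4)) = \pi_3(G(4))$ and the map from $\pi_3(SO(4))$ into it, not merely the stable statement, and then one must verify that pulling back along $q$ (equivalently, along $\mathbb{C}P^2 \hookrightarrow$ something, using the cofiber sequence) really does reduce the order from the $S^4$-value to exactly $6$ and not merely to a divisor of it — i.e.\ one needs the sharp lower bound ``$f_j$ does not exist for $j$ not a multiple making the class vanish'' is not needed here (Lemma \ref{fiber} is only an existence statement), so in fact for this lemma it suffices to show the $6j$-shift class dies, which is the easier direction; the sharpness will be supplied later by surgery theory together with Theorem \ref{result}. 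So concretely the lemma only requires: $J$ of the generator $\alpha$ in $\pi_4(BG(4))$ restricted along $q^*$ to $\mathbb{C}P^2$ has order dividing $6$. That is the one computation to nail down.
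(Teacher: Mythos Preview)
Your plan for the spin family $M_{k,l}$ is essentially the paper's argument: compare classifying maps in $[\mathbb{C}P^2,BSG(4)]$ via the cofiber sequence~(\ref{seq:1}), compute that $i_*\colon \pi_4(BSO(4))\to\pi_4(BSG(4))\cong\mathbb{Z}_{12}\oplus\mathbb{Z}$ sends $\alpha$ to a generator of the $\mathbb{Z}_{12}$ summand (so the unstable order is $12$, not $24$), and then observe that the image of $(\Sigma h)^*\colon\pi_3(BSG(4))\to\pi_4(BSG(4))$ is the order-$2$ subgroup $\{0,6\}\subset\mathbb{Z}_{12}$, whence $q^*i_*(6j\alpha)=0$ and $i_*(\xi_{k+6j,l})=i_*(\xi_{k,l})$.

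The genuine gap is in the nonspin case. You say ``the identical cofiber-sequence analysis applies,'' but it does not, and the paper explicitly flags this: the sequences in Diagram~1 are exact only as sequences of \emph{pointed sets}, because $BSG(4)$ has no relevant $H$-space structure. The coaction $p\colon\mathbb{C}P^2\to\mathbb{C}P^2\vee S^4$ makes $\pi_4(BSG(4))$ act on $[\mathbb{C}P^2,BSG(4)]$, and exactness tells you only that the stabilizer of the \emph{basepoint} is the image of $(\Sigma h)^*$. What you need is that $i_*(6j\alpha)$ stabilizes the non-basepoint class $i_*(\xi'_{k,l})$, and nothing in the Puppe sequence gives you that for free: stabilizers at different points of a set with a group action need not coincide. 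Concretely, from $i_*(\xi'_{k+6j,l})=i_*(6j\alpha)\cdot i_*(\xi'_{k,l})$ via the coaction and $q^*i_*(6j\alpha)=*$ you cannot conclude $i_*(\xi'_{k+6j,l})=i_*(\xi'_{k,l})$.

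The paper circumvents this in two steps. First it passes to the stable classifying space $BSG$, which \emph{is} an infinite loop space, so $[\mathbb{C}P^2,BSG]$ is an honest abelian group (in fact cyclic of order $24$); there the subtraction argument is legitimate and yields a stable fiber homotopy equivalence. Second it lifts the resulting homotopy $f_t\colon\mathbb{C}P^2\times I\to BSG$ back to $BSG(4)$ by an obstruction argument over the skeleta of $\mathbb{C}P^2\times I$, using that $\pi_2(SG/SG(4))=0$ and a rational computation (via a rational equivalence $BSG(4)\to K(\mathbb{Q},4)$ and the fact that $\gamma\oplus\varepsilon^2$ factors through $BSO(3)$) to kill the single obstruction in $\pi_4$. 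Your outline contains no substitute for this lifting step.
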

\begin{proof}
Let $SG(4)$ be the topological monoid of orientation preserving self homotopy equivalences of $S^3$, $BSG(4)$ the classifying space of $S^3$-fibrations. Therefore, the sphere bundles of two vector bundles are fiber homotopy equivalent as $S^3$-fibrations if and only if the classifying maps of the vector bundles have the same image under the map $i_* \colon \left[\mathbb{C}P^2,BSO(4)\right] \to \left[\mathbb{C}P^2,BSG(4)\right]$ induced by the inclusion $i:SO(4)\rightarrow SG(4)$.

The cofibration sequence (\ref{seq:1}) induces the following exact diagram

\tikzset{global scale/.style={
    scale=#1,
    every node/.append style={scale=#1}
    }
    }
\setcounter{figure}{0}  
\begin{figure}[htbp]
\centering
\begin{tikzcd}[global scale=0.91,column sep=small]
0   \ar[r]  & \pi_4(BSO(4)) \ar[r,"q^*"]\ar[d,"i_*"]  &\left[\mathbb{C}P^2,BSO(4)\right] \ar[r]\ar[d,"i_*"]   &\pi_2(BSO(4))  \ar[r]\ar[d,"i_*"]  &0\\
\pi_3(BSG(4)) \ar[r,"(\sum h)^*"]    &   \pi_4(BSG(4)) \ar[r,"q^*"]  &\left[\mathbb{C}P^2,BSG(4)\right] \ar[r]   &\pi_2(BSG(4))  \ar[r,"h^*"]  &\pi_3(BSG(4))
\end{tikzcd}
\caption{Diagram 1}
\label{diagram_1}
\end{figure}

It's known that $i_*:\pi_4(BSO(4))\rightarrow \pi_4(BSG(4))$ can be identified with 
$\mathbb{Z}\{\alpha\}\oplus\mathbb{Z}\{\beta\}\rightarrow \mathbb{Z}_{12}\oplus\mathbb{Z}$, $k\alpha+l\beta \mapsto ((k\bmod{12}),l)$ \cite{crowley}. To identify the homomorphism $(\sum h)^*: \pi_3(BSG(4))\rightarrow \pi_4(BSG(4))$ we consider the fibration $SF(3)\rightarrow SG(4) \rightarrow S^3$, where $ SG(4) \to S^3$ is the evaluation at the base point, and  $SF(3)$ the subspace of $SG(4)$ consisting of orientation preserving homotopy equivalences of $S^3$ which fix the base point. 
The Hopf map $h \colon S^3 \to S^2$ induces the following diagram
\begin{figure}[H]
\centering
\begin{tikzcd}
    0   \ar[r]  & \pi_2(SF(3)) \ar[r,"\cong"]\dar["h^*"] &\pi_2(SG(4)) \ar[r]\dar["h^*"]   & \pi_2(S^3) \rar\dar["h^*"]   & 0\\
    0   \ar[r]  & \pi_3(SF(3)) \ar[r]        &\pi_3(SG(4)) \ar[r]          & \pi_3(S^3) \rar & 0
\end{tikzcd}
\caption{Diagram 2} 
\label{diagram_2}
\end{figure}

By the usual adjoint correspondence, $h^*:\pi_2(SF(3))\rightarrow \pi_3(SF(3))$ can be identified with $(\Sigma^3h)^*:\pi_5(S^3)\rightarrow \pi_6(S^3)$, which is known to be identified with the inclusion 
$\mathbb{Z}_2\hookrightarrow \mathbb{Z}_{12}$ \cite{Toda}.
Therefore, $(\sum h)^*: \pi_3(BSG(4))\rightarrow \pi_4(BSG(4))$ 
can be identified with the inclusion into the first summand $\mathbb{Z}_2 \hookrightarrow \mathbb{Z}_{12}\oplus\mathbb{Z}$.

Therefore from commutative diagram \ref{diagram_1}
one has:
$$i_*(\xi_{k+6j,l})=q^*i_*((k+6j)\alpha+l\beta)=q^*i_*(k\alpha+l\beta)=i_*(\xi_{k,l}).$$ 
This shows that $M_{k+6j,l}$ and$M_{k,l}$ are fiber homotopy equivalent.

For the nonspin case one needs more detailed analysis since the exact sequences in diagram \ref{diagram_1} are not exact sequences of abelian groups. 
We will prove the statement in two steps. 

First we show that the spherical fibrations $\xi_{k+6j,l}'$ and $\xi_{k,l}'$ are stably fiber homotopy equivalent. 
The cofiber sequence (\ref{seq:1}) induces the following commutative diagram of short exact sequences

\begin{center}

\begin{tikzcd}[global scale=0.95,column sep=small]
        0   \ar[r]  & \pi_4(BSO) \ar[r]\ar[d,two heads]  &\left[\mathbb{C}P^2,BSO\right] \ar[r]\ar[d,"i_*"]   &\pi_2(BSO)  \ar[r]\ar[d,"\cong"]  &0\\
        \pi_3(BSG) \ar[r,"\sum\eta^*"]    &   \pi_4(BSG) \ar[r,"q^*"]  &\left[\mathbb{C}P^2,BSG\right] \ar[r]   &\pi_2(BSG)  \ar[r,"0"]  &\pi_3(BSG)
\end{tikzcd}
    
\end{center}
The following facts are known. 
\begin{enumerate}
    \item Taking the first Pontrjagin class induces an bijective homomorphism  
    $$p_1 \colon \left[\mathbb{C}P^2,BSO\right] \to \mathbb{Z}.$$
    \item There are isomorphisms $\pi_4BSG\cong \pi_3^s\cong \mathbb{Z}_{24}$, and we have the following diagram
    
    \begin{center}

    \begin{tikzcd}
        \pi_3(BSG(4))\cong \mathbb{Z}_2 \rar\dar & \pi_4(BSG(4))\cong \mathbb{Z}_{12}\oplus\mathbb{Z}\dar\\
        \pi_3(BSG) \cong \mathbb{Z}_2 \rar["(\sum\eta)^*",hook] & \pi_4(BSG)\cong \mathbb{Z}_{24}
    \end{tikzcd}
    \end{center}

    where the natural map $\pi_4(BSG(4))\rightarrow \pi_4(BSG)$ is the injection on the first summand and zero on the second summand,
    as $\Sigma^{\infty}:\pi_6(S^3)\rightarrow \pi_3^s$ is indentified with $\mathbb{Z}_{12} \hookrightarrow \mathbb{Z}_{24}$ \cite{Toda}.
\end{enumerate}
Therefore by the $5$-lemma the  homomorphism $i_*:\left[\mathbb{C}P^2,BSO\right]\rightarrow \left[\mathbb{C}P^2,BSG\right]$ is surjective . 
Hence the group $\left[\mathbb{C}P^2,BSG\right]$ is cyclic and isomorphic to $ \mathbb{Z}_{24}$ by counting order. 
Under stabilization $BSO(4) \to BSO \to BSG$ the classifying maps for the vector bundles $ \xi_{k,l}^{\prime}$ and $\xi_{k+6j,l}^{\prime}$ are homotopic, since $p_1(\xi_{k+6j,l}^{\prime}) \equiv p_1(\xi_{k,l}^{\prime}) \pmod{24}$ (see formula (\ref{cc})). 
Therefore there exists a homotopy 
$$f_t \colon \mathbb C P^2 \times [0,1] \to BSG$$ 
between the classifying maps for $i_*j_*(\xi_{k+6j,l}^{\prime})$ and $i_*j_*(\xi_{k,l}^{\prime})$. 

In the second step we find by induction on skeleta a  lift of $f_t$ to  $BSG(4)$ 
$$g_t:\mathbb{C}P^2\times I\rightarrow BSG(4),$$ 
with $g_0 =i_*\xi_{k+6j,l}^{\prime}$ and $g_1 =i_*\xi_{k,l}^{\prime}$. This  shows that $\xi_{k+6j,l}^{\prime}$ and $\xi_{k,l}^{\prime}$ are fiber homotopy equivalent.

Let $SG/SG(4)$ be the homotopy fibre of $BSG(4)\rightarrow BSG$, then a basic calculation shows that $\pi_2SG/SG(4)= 0$ and $\pi_4SG/SG(4)\cong \mathbb{Z}$. By elementary obstruction theory there is a lift $g_t^{(2)}$  of $f_t$ on the $2$-skeleton of $\mathbb C P^4$
$$g_t^{(2)} \colon S^2 \times I \to BSG(4).$$
Let $X=\mathbb{C}P^2\times\partial I\cup S^2\times I$, then $\mathbb{C}P^2\times I= X\cup_m D^5$, where we regard $S^4 = D^4 \times \partial I \cup S^3 \times I$ and $m \colon S^4 \to X$ has the form 
$$\iota \times \mathrm{id} \cup h \times \mathrm{id} \colon  D^4 \times \partial I \cup S^3 \times I \to \mathbb C P^2 \times I$$
where $\iota \colon D^4 \to \mathbb C P^2$ is the inclusion of the top cell and $h$ the Hopf map. Then the homotopy $g_t^{(2)}$ extends to the desired homotopy $g_t \colon \mathbb C P^2 \times I \to BSG(4)$ if and only if the composite 
$$ (g_0 \cup g_1 \cup g_t^{(2)} )\circ m \colon S^4 \to X= \mathbb{C}P^2\times\partial I\cup S^2\times I \to BSG(4)$$
is null-homotopic. By standard algebraic topology one shows that $\pi_4(X)$ is isomorphic to $\mathbb Z$, generated by $[m]$. Denote $g_0 \cup g_1 \cup g_t^{(2)}$ by $\phi$, it suffices to show that $\phi_* \colon \pi_4(X) \to \pi_4(BSG(4))$ is trivial. 

By construction, the composite $j_* \circ \phi_* \colon \pi_4(X) \to \pi_4(BSG(4)) \to \pi_4(BSG)$ is trivial. On the other hand, the homomorphism $j_* \colon \pi_4(BSG(4)) \to \pi_4(BSG)$ can be identified with $\mathbb Z_{12} \oplus \mathbb Z \to \mathbb Z_{24}$, where on the first summand it is the inclusion. 
Therefore $\phi_*([m])$ is  not in the $\mathbb Z_{12}$-summand. In the sequel we will show it is zero by showing that it is zero in $\pi_4(BSG(4) )\otimes \mathbb Q$.

Let $X \to S^4 \vee X \vee S^4$ be the map which pinches a $4$-sphere from each top cell $D^4$ of $\mathbb C P^2$, and $S^4 \to  S^4 \vee S^4 \vee S^4$ be the map which pinches a $4$-sphere from each $D^4$ in $S^4 = D^4 \times \partial I \cup S^3 \times I$. 
Then the map $\phi \circ m \colon X \to BSG(4)$ has a factorization shown in the following commutative diagram
$$\xymatrix{
S^4 \ar[rr]^m \ar[d] &&  X \ar[r]^{\phi} \ar[d] & BSG(4) \\
S^4 \vee S^4 \vee S^4 \ar[rr]^{\mathrm{id} \vee m \vee \mathrm{-id} } && S^4 \vee X \vee S^4 \ar[ur]_{f_0 \vee f \vee f_1} & }$$
where $f_0=i((k+6j)\alpha+l\beta)$, $f_1=i(k\alpha+l\beta)$, $f=i(\eta\oplus \varepsilon^2)\times\partial I\cup g_t^{(2)}$.

First recall that we have shown that $[f_0] = [f_1] \in \pi_4(BSG(4))\otimes \mathbb{Q}$.

The only non-trivial rational homotopy group of $BSG(4)$ is $\pi_4(BSG(4))\otimes \mathbb{Q}\cong \mathbb{Q}$. There is a rational homotopy equivalence 
$l:BSG(4)\rightarrow K(\mathbb{Q},4)$ \cite{sullivan2005geometric}.  
We claim that the composite $l \circ f \colon X \to BSG(4) \to K(\mathbb Q,4)$ is null homotopic. This will finish the proof for the nonspin case. To prove the claim, first note that the inclusion $\mathbb C P^2 \cup \mathbb C P^2 \subset X$ induces an isomorphism $H^4(X;\mathbb Q) \to H^4(\mathbb C P^2; \mathbb Q) \oplus H^4( \mathbb C P^2 ; \mathbb Q)$. Therefore from the one-one correspondence between $H^4(-;\mathbb Q)$ and $[-,K(\mathbb Q,4)]$ it suffices to show that the map $l \circ i \circ (\eta \oplus \varepsilon^2) \colon \mathbb C P^2 \to BSG(4) \to K(\mathbb Q, 4)$ is null-homotopic. Notice that the map $\eta \oplus \varepsilon^2$ factors through $BSO(3)$.  Since the composition $\pi_3(SO(3))\rightarrow \pi_3(SO(4))\rightarrow \pi_3(SG(4))$ is trivial after tensoring with $\mathbb{Q}$, the map $BSO(3)\rightarrow BSO(4)\rightarrow BSG(4)\rightarrow K(\mathbb{Q},4)$ is trivial. This shows the claim.

\end{proof}

\section{surgery exact sequence and homotopy classification} \label{4}
In this section we prove the homotopy classification (Theorem \ref{homotopy}) by showing that  every element in the PL structure set $\mathscr{S}^{PL}(M_{k,l})$ is represented by a fiber homotopy equivalence $f_j \colon M_{k+6j,l} \to M_{k,j}$. The same also holds for the nonspin case. This approach was first used in \cite{crowley}.

For the convenience of the reader, we give a briefy introduction to simply connected surgery exact sequence. We work in the piecewise linear (PL) category. For more information we refer to \cite{surgery}.

Let $(M,\partial M)$ be an $n$-dimensional compact PL manifold with possibly empty boundary. A PL manifold structure on $(M,\partial M)$ is a triple $(N,\partial N,g)$, 
where $N$ is a compact PL manifold and  $g:(N,\partial N) \rightarrow (M,\partial M)$ is a homotopy equivalence. 
Two triples $(N_0,\partial N_0,g_0)$ and $(N_1,\partial N_1, g_1)$ are concordant if there is a PL-homeomorphism
$f:(N_0,\partial N_0)\rightarrow (N_1,\partial N_1)$ so that the following diagram commutes up to homotopy.

\begin{center}
    \begin{tikzcd}
        (N_0,\partial N_0)\ar[dd,"f"']\drar["g_0"] &\\
        &(M,\partial M)\\
        (N_1,\partial N_1)\urar["g_1"']&
    \end{tikzcd}
\end{center}
The PL structure set $\mathscr{S}^{PL}(M)$ is the set of concordance classes of triples $(N,\partial N, g)$. 

Let $G/PL$ be the homotopy fiber of $J \colon BPL\rightarrow BG$. Then the set of homotopy classes of maps $M\rightarrow G/PL$ are in 1-1 correspondence with the set 
of equivalence classes of pairs $(\eta,t)$, where $\eta$ is a stable PL bundle over $M$ and $t$ is a fiber homotopy trivialization of the associated
spherical fibration $J\eta$.

The map of normal invariants $\eta: \mathscr{S}^{PL}(M)\rightarrow [M,G/PL]$ is defined by sending a triple $(N,\partial N, g)$ to $\eta(g):=(\nu_M-(g^{-1})^*\nu_N,t(g))$,
where $t(g)$ is a fiber homotopy trivialization of $J(\nu_M-(g^{-1})^*\nu_N)$ defined in a standard way.

If $\pi_1(M)\cong \pi_1(\partial M) \cong 0$ and $n\geq 6$, we have the following two facts \cite{Sullivan1996}:
\begin{enumerate}
    \item If $\partial M\neq \emptyset$, $\mathscr{S}^{PL}(M)\xrightarrow{\eta} [M,G/PL]$ is an isomorphism;
    \item If $\partial M = \emptyset$, we have an exact sequence of based sets
        $$0\rightarrow \mathscr{S}^{PL}(M)\xrightarrow{\eta} [M,G/PL] \rightarrow L_{n}(e).$$
\end{enumerate}
where $L_n(e)\cong\pi_n(G/PL)\cong\left\{
    \begin{aligned}
        &\mathbb{Z}, &\mathrm{if}\ n=0\ \mathrm{ \mathrm{mod}}\ 4,\\
        &\mathbb{Z}_2,&\mathrm{if}\ n=2\ \mathrm{ \mathrm{mod}}\ 4,\\
        &0, &\mathrm{if}\ n=1\ \mathrm{ \mathrm{mod}}\ 2.
    \end{aligned}
    \right.$
\vspace*{10pt}

Let $\iota:M_{k,l} \hookrightarrow W_{k,l}$ be the canonical inclusion. Then 
we have the following commutative diagram, where the normal invariants are isomorphisms by the two facts mentioned above. 
\begin{figure}[htbp]
\centering
\begin{tikzcd}
    \mathscr{S}^{PL}(W_{k,l})\dar\rar["\eta","\cong"'] &\left[W_{k,l},G/PL\right]\dar\\
    \mathscr{S}^{PL}(M_{k,l})\rar["\eta","\cong"'] &\left[M_{k,l},G/PL\right]
\end{tikzcd}
\caption{Diagram 3}
\label{diagram_3}
\end{figure}

We have the same diagram when $M_{k,l}$ and $W_{k,l}$ are replaced by $M^{\prime}_{k,l}$ and $W^{\prime}_{k,l}$, respectively.

The infinite loop space structure on $G/PL$ defines a generalized cohomology theory, in which $\left[-,G/PL\right]$ is the $0$-th group.
Let $M_{k,l}^0$ denote $M_{k,l} \setminus \{ \mathrm{pt} \}$.
We have $\left[M_{k,l}^0,G/PL\right]\cong \mathscr{S}^{PL}(M_{k,l})$ \cite{Sullivan1996}. 
Then by diagram \ref{diagram_3}, we have $\left[M_{k,l},G/PL\right]\cong \left[M_{k,l}^0,G/PL\right]$.
It is easy to see that the Atiyah-Hurzebruch spectral sequence for $M_{k,l}^0$ collapses at the $E_2$-page, then so does $M_{k,l}$.
It's ready to see that the Atiyah-Hurzebruch spectral sequences for both $\mathbb{C}P^2$ and $M_{k,l}$ collapse at the $E_2$-page, therefore we have the following commutative diagram
    \begin{center}
    \begin{tikzcd}
        0   \rar    &\mathbb{Z}     \dar[two heads] \rar &  \left[\mathbb{C}P^2,G/PL\right] \rar\dar["(p_{k,l})^*"] &\mathbb{Z}_2 \rar\dar["\cong"] &0\\
        0   \rar    &\mathbb{Z}_l      \rar &  \left[M_{k,l},G/PL\right] \rar &\mathbb{Z}_2 \rar &0
    \end{tikzcd}
    \end{center}
where $p_{k,l}$ denotes the sphere bundle projection. 
By the $5$-lemma, $(p_{k,l})^*:\left[\mathbb{C}P^2,G/PL\right] \rightarrow \left[M_{k,l},G/PL\right]$ is surjective. It's known that $\left[\mathbb{C}P^2,G/PL\right]\cong \mathbb{Z}$ (\cite[Chapter 14C]{Wall1999Surgery}) , 
therefore $\left[M_{k,l},G/PL\right]\cong \mathbb{Z}_{2l}$. Similarly, we have $\left[M^{\prime}_{k,l},G/PL\right]\cong \mathbb{Z}_{2l}$.

In Lemma \ref{fiber} we have shown that there are fiber homotopy equivalences $f_j \colon M_{k+6j,l} \to M_{k,l}$ and $f'_j \colon M'_{k+6j,l} \to M'_{k,l}$. They represent elements in $\mathscr{S}^{PL}(M_{k,l})$ and $\mathscr{S}^{PL}(M'_{k,l})$, respectively. 

\begin{lemma}\label{invariant}
Let $f_j \colon M_{k+6j,l} \to M_{k,l}$ be a fiber homotopy equivalence, then $\eta(f_j)=j\in [M_{k,l},G/PL]\cong\mathbb{Z}_{2l}$. The same holds when $M_{k,l}$ and $f_j$
are replaced by $M_{k,l}^{\prime}$ and $f_j^{\prime}$, respectively.
\end{lemma}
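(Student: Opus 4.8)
The plan is to compute the normal invariant $\eta(f_j) \in [M_{k,l},G/PL]\cong \mathbb Z_{2l}$ by pulling it back from $\mathbb C P^2$ along the projection. By the discussion preceding the lemma, the map $(p_{k,l})^* \colon [\mathbb C P^2, G/PL] \cong \mathbb Z \to [M_{k,l},G/PL]\cong \mathbb Z_{2l}$ is surjective, and in fact (from the commutative ladder with the $\mathbb Z \twoheadrightarrow \mathbb Z_l$ on the degree-$4$ part and the iso on the degree-$2$ part) it is reduction modulo $2l$ after identifying generators suitably. So it is enough to realize $\eta(f_j)$ as $(p_{k,l})^*$ of a class in $[\mathbb C P^2,G/PL]$ that I can evaluate, and for this I would use Diagram~\ref{diagram_3}: the fiber homotopy equivalence $f_j \colon M_{k+6j,l}\to M_{k,l}$ extends over the disk bundles to a PL structure on $W_{k,l}$ (this is essentially the content of Lemma~\ref{fiber}, since $f_j$ comes from a fiber homotopy equivalence of the $S^3$-fibrations, hence of the associated $D^4$-fibrations), so $\eta(f_j)$ lies in the image of $\mathscr S^{PL}(W_{k,l}) \to \mathscr S^{PL}(M_{k,l})$, i.e. in the image of $[W_{k,l},G/PL]\to [M_{k,l},G/PL]$, which coincides with the image of $(p_{k,l})^*$ since $\pi_{k,l}\colon W_{k,l}\to \mathbb C P^2$ is a homotopy equivalence.

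The key computation is then: what class in $[\mathbb C P^2,G/PL]\cong \mathbb Z$ does the induced PL structure on $W_{k,l}$ (or equivalently on $\mathbb C P^2$) represent? Since $[\mathbb C P^2,G/PL]$ is detected by the degree-$4$ normal invariant, which after rationalization is governed by the first Pontryagin class (the $L$-genus / surgery obstruction formula: for $4$-manifolds the normal invariant is essentially $\tfrac{1}{8}(p_1(\nu_N)-p_1(\nu_M))$ evaluated on the fundamental class, together with the Arf/Kervaire piece in degree $2$), I would compute the difference of stable normal (equivalently tangent) bundles. Concretely, $f_j$ identifies the $S^3$-fibrations $\xi_{k+6j,l}$ and $\xi_{k,l}$, so on $W$ the tangent bundles differ by the difference of the pulled-back vector bundles $\xi_{k+6j,l}$ and $\xi_{k,l}$ over $\mathbb C P^2$; by formula~(\ref{cc}) we have $p_1(\xi_{k+6j,l})-p_1(\xi_{k,l}) = 24j\, w_{\mathbb C P^2}$, and the analogous statement holds in the nonspin case. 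Feeding $p_1$-difference $=24j$ into the degree-$4$ normal-invariant formula (which divides by $8$) produces $3j$; tracking this through the identification $[\mathbb C P^2,G/PL]\cong\mathbb Z$ and the surjection onto $[M_{k,l},G/PL]\cong\mathbb Z_{2l}$, and being careful about the factor-of-two conventions between $G/PL$ in degree $4$ and the Milnor-manifold normalization used in \S\ref{tool}, should give exactly $\eta(f_j)=j$. The degree-$2$ (Kervaire) component contributes nothing because it is pulled back isomorphically from $\mathbb C P^2$ and depends only on $w_2$, which is unchanged.

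The main obstacle I expect is bookkeeping the normalizations: matching the generator of $[\mathbb C P^2, G/PL]\cong\mathbb Z$ coming from Wall's computation with the generator of $[M_{k,l},G/PL]\cong\mathbb Z_{2l}$ coming from the spectral-sequence ladder, and making sure the ``$j$'' that comes out of the $p_1$-computation is normalized consistently with the Kreck-Stolz $s_i$-conventions of Theorem~\ref{result} (in particular the factors $2^5\cdot 7$, $2^7\cdot 7$, etc.). One clean way to pin down the normalization, rather than chasing constants, is to argue indirectly: by Theorem~\ref{result}(5)–(6) and Theorem~\ref{smoothtop}/\ref{PL}, the manifolds $M_{k+6j,l}$ for $j=0,1,\dots$ run through exactly the PL-homeomorphism classes, so the subgroup of $\mathscr S^{PL}(M_{k,l})$ generated by $\eta(f_1)$ must have order exactly $2l$ (since the $S^{PL}$ side also sees the $28$-fold smoothing ambiguity and the Kervaire class, giving total order $2l$), forcing $\eta(f_1)$ to be a generator; then a single low-degree ($\mathbb C P^2$-level) computation of the Pontryagin difference fixes the sign/identification so that $\eta(f_j)=j$. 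The nonspin case $M'_{k,l}$ is entirely parallel, using the second line of (\ref{cc}) (the $p_1$-difference is again $24j\,w_{\mathbb C P^2}$) and $[\mathbb C P^2,G/PL]$ detected the same way.
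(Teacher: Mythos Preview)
Your overall strategy coincides with the paper's: extend $f_j$ to a fiber homotopy equivalence $F_j\colon W_{k+6j,l}\to W_{k,l}$ of the disk bundles, use that $\pi_{k,l}\colon W_{k,l}\simeq \mathbb C P^2$, and read off the normal invariant from the Pontryagin-class difference $p_1(\xi_{k+6j,l})-p_1(\xi_{k,l})=24j\,w_{\mathbb C P^2}$. The gap is in the normalization step. The formula ``divide the $p_1$-difference by $8$'' computes the \emph{surgery obstruction} $\theta\colon [\mathbb C P^2,G/PL]\to L_4(e)\cong\mathbb Z$, not the normal invariant itself; it does not tell you where you sit inside $[\mathbb C P^2,G/PL]\cong\mathbb Z$. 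What you actually need is the forgetful map
\[
j_*\colon [\mathbb C P^2,G/PL]\longrightarrow [\mathbb C P^2,BPL]\cong\mathbb Z,
\]
and this is multiplication by $24$, not $8$ (coming from the fibration $G/PL\to BPL\to BG$ and $\pi_4(BG)\cong\pi_3^s\cong\mathbb Z_{24}$; the paper cites \cite{wxq} for this). Since $j_*\eta(F_j)=i_{k,l}^*\big(\nu(W_{k,l})-(F_j^{-1})^*\nu(W_{k+6j,l})\big)=\xi_{k+6j,l}-\xi_{k,l}$ has $p_1$ equal to $24j$, you get $\eta(F_j)=j\in[\mathbb C P^2,G/PL]$ on the nose, and then $\eta(f_j)=p_{k,l}^*(j)=j\in\mathbb Z_{2l}$. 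No further constant-chasing is needed.

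Your proposed indirect repair does not close the gap either. Theorem~\ref{result}(5) says the $M_{k+6j,l}$ realize exactly $l$ PL-homeomorphism types, so the assignment $j\mapsto \eta(f_j)$ has image of size at least $l$ in $\mathscr S^{PL}(M_{k,l})\cong\mathbb Z_{2l}$; but this is compatible with $\eta(f_1)=2$, i.e.\ with the image being the index-$2$ subgroup. (Different PL-types force different structure-set classes, but not conversely, so you cannot squeeze out $2l$ distinct classes from $l$ PL-types.) Hence you still need the honest $\times 24$ computation above to pin down $\eta(f_1)=1$.
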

\begin{proof}
    For any vector bundle $\zeta$, the disc bundle $D(\zeta)$ is homeomorphic to the mapping cylinder of spherical bundle projection.
Therefore, there exists a natural fiber homotopy equivalence $F_j:W_{k+6j,l}\rightarrow W_{k,l}$ extending $f_j$. Consider the following commutative diagram

\begin{center}
\begin{tikzcd}
    \mathcal{S}^{PL}(W_{k,l}) \rar["\eta"]\dar["i_{k,l}^*"] & \left[W_{k,l},G/PL\right] \rar["j_*"]\dar["i_{k,l}^*"] & \left[W_{k,l},BPL\right] \dar["i_{k,l}^*"] \\
    \mathcal{S}^{PL}(\mathbb{C}P^2) \rar["\eta"] & \left[\mathbb{C}P^2,G/PL\right] \rar["j_*","\times 24"'] & \left[\mathbb{C}P^2,BPL\right] 
\end{tikzcd}
\end{center}
where $i_{k,l}:\mathbb{C}P^2\rightarrow W_{k,l}$ is the inclusion of the zero section and $j_*$ is interpreted by forgetting the trivialization.
It's well know that $[\mathbb{C}P^2,BPL]\cong \mathbb{Z}$ and this isomorphism is determined by $p_1$. 
And $j_*:[\mathbb{C}P^2,G/PL]\rightarrow [\mathbb{C}P^2,BPL]$ is known to be multiplication by $24$ \cite{wxq}.
Thus we obtain
\begin{align*}
    i_{k,l}^*j_*\eta(F_j)&=i_{k,l}^*(\nu(W_{k,l})-F_j^{-1*}\nu(W_{k+6j,l}))\\
            &=i_{k,l}^*(\pi^{*}_{k,l}(-\xi_{k,l}-\tau(\mathbb{C}P^2))-F_j^{-1*}\pi^{*}_{k+6j,l}(-\xi_{k+6j,l}-\tau(\mathbb{C}P^2)))\\
            &=i_{k,l}^*\pi^{*}_{k,l}(\xi_{k+6j,l}-\xi_{k,l})\\
            &=\xi_{k+6j,l}-\xi_{k,l}\\
            &=4(k+6j)-2l-(4k-2l)\\
            &=24j\in \mathbb{Z}.
\end{align*}
Hence $\eta(F_j)=j\in \left[W_{k,l},G/PL\right]\cong \mathbb{Z}$. From the following commutative diagram
\begin{center}
\begin{tikzcd}[column sep=large,row sep =large]
    \left[W_{k,l},G/PL\right] \dar["\iota^*"'] & \left[\mathbb{C}P^2,G/PL\right] \lar["\cong","\pi^{*}_{k,l}"']\dlar["p^{*}_{k,l}"]\\
    \left[M_{k,l},G/PL\right]   &
\end{tikzcd}
\end{center}
we conclude that $\eta(f_j)=\iota^*\eta(F_j)=p^{*}_{k,l}\pi^{*-1}_{k,l}\eta(F_j)=j\in \mathbb{Z}_{2l}$.

The proof for $M^{\prime}_{k,l}$ is similar.
\end{proof}

\begin{proposition}\label{structure_set}
    A  $7$-dimensional closed nonspin (spin) PL-manifold  $M$ is homotopy equivalent to $M_{k,l}$ ($M^{\prime}_{k,l}$) if and only if $M$ is 
    PL-homeomorphism to $M_{k+6j,l}$ ($M^{\prime}_{k+6j,l}$) for some $j\in \mathbb{Z}$.
\end{proposition}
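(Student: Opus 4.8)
=== PROOF PROPOSAL ===

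\medskip

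\noindent\textbf{Proof proposal.}
The plan is to exploit the fact, already established in Section~\ref{4}, that the normal invariant map $\eta \colon \mathscr S^{PL}(M_{k,l}) \to [M_{k,l}, G/PL] \cong \mathbb Z_{2l}$ is a bijection (since $M_{k,l}$ is odd-dimensional, the surgery obstruction group $L_7(e)$ vanishes, so the surgery exact sequence gives a bijection on based sets, and then one promotes this to a genuine bijection of sets by the standard argument using $[M^0_{k,l},G/PL]\cong\mathscr S^{PL}(M_{k,l})$). Thus the PL structure set of $M_{k,l}$ has exactly $2l$ elements. On the other hand, by Lemma~\ref{invariant} the fiber homotopy equivalences $f_j \colon M_{k+6j,l}\to M_{k,l}$ have normal invariants $\eta(f_j)=j \in \mathbb Z_{2l}$, so as $j$ ranges over $\mathbb Z$ (equivalently over $\mathbb Z_{2l}$) these $f_j$ realize \emph{all} $2l$ elements of $\mathscr S^{PL}(M_{k,l})$. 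Consequently every PL structure on $M_{k,l}$, i.e. every homotopy equivalence $g\colon N \to M_{k,l}$ from a closed $7$-dimensional PL manifold, is concordant to some $f_j$; in particular $N$ is PL-homeomorphic to the source $M_{k+6j,l}$ of $f_j$. This proves the ``only if'' direction.

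\medskip

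\noindent For the ``if'' direction one simply observes that the $f_j$ are by construction homotopy equivalences (indeed fiber homotopy equivalences of the associated spherical fibrations, by Lemma~\ref{fiber}), so if $M$ is PL-homeomorphic to $M_{k+6j,l}$ then the composite $M \to M_{k+6j,l} \xrightarrow{f_j} M_{k,l}$ is a homotopy equivalence. The spin case is handled identically, replacing $M_{k,l}$, $f_j$ by $M'_{k,l}$, $f'_j$ throughout and using the corresponding statements in Lemmas~\ref{fiber} and~\ref{invariant} and the computation $[M'_{k,l},G/PL]\cong\mathbb Z_{2l}$.

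\medskip

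\noindent The step I expect to require the most care is the identification of the set of homotopy equivalences $N \to M_{k,l}$ up to PL-homeomorphism of the source with the structure set $\mathscr S^{PL}(M_{k,l})$: one must check that two triples $(N_0,g_0)$, $(N_1,g_1)$ with $N_0$ PL-homeomorphic to $N_1$ (but not necessarily via a map compatible with the $g_i$) can still be compared, and in particular that if $g\colon N\to M_{k,l}$ is an arbitrary homotopy equivalence then $(N,g)$ lies in the orbit of some $f_j$ under the action of self-homotopy-equivalences of $M_{k,l}$ — but for the present statement it suffices that $(N,g)$ \emph{equals} some $[f_j]$ in $\mathscr S^{PL}(M_{k,l})$ as unbased sets, which follows at once from surjectivity of $j\mapsto\eta(f_j)$ and injectivity of $\eta$. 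The one genuinely delicate point is to make sure the $2l$ elements $\eta(f_j)$, $j=0,\dots,2l-1$, are pairwise distinct, i.e. that Lemma~\ref{invariant} really gives $\eta(f_j)=j$ and not merely $\eta(f_j)=j \bmod(\text{something smaller})$; this was arranged in the proof of Lemma~\ref{invariant} via the factor-$24$ computation of $j_*\colon[\mathbb C P^2,G/PL]\to[\mathbb C P^2,BPL]$ together with $[\mathbb C P^2,G/PL]\cong\mathbb Z$, and I would simply cite that.
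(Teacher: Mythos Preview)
Your proposal is correct and follows essentially the same route as the paper: both use the bijectivity of $\eta\colon \mathscr{S}^{PL}(M_{k,l})\to [M_{k,l},G/PL]\cong\mathbb Z_{2l}$ together with Lemma~\ref{invariant} (that $\eta(f_j)=j$) to conclude that any homotopy equivalence $g\colon M\to M_{k,l}$ is concordant to some $f_j$, whence $M$ is PL-homeomorphic to $M_{k+6j,l}$. Your added discussion of the ``if'' direction and of the potential subtleties is fine but unnecessary---the paper simply omits the trivial converse and cites Diagram~\ref{diagram_3} for the bijectivity of $\eta$.
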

\begin{proof}
    Let $f:M\rightarrow M_{k,l}$ be a homotopy equivalence. Then by Lemma \ref{invariant}, there exists some $j$ such that
    $\eta(f_j)=\eta(f)$. Since $\eta \colon \mathscr{S}^{PL}(M_{k,l}) \to [M_{k,l},G/PL]$ is an isomorphism (see diagram \ref{diagram_3}), one has $(M_{k+6j,l},f_j) = (M,f) \in \mathscr{S}^{PL}(M_{k,l})$. Hence $M_{k+6j,l}$ and $M$ are 
     PL-homeomorphic. 
\end{proof}

Theorem \ref{homotopy} is a consequence of Proposition \ref{structure_set} and the PL classification. 
\begin{proof}[Proof of Theorem \ref{homotopy}]
    Suppose $M_{k,l}$ is homotopy equivalent to $M_{k^{\prime},l}$, then by Proposition \ref{structure_set} there exists some $j$ 
    such that $M_{k,l}$ is PL-hemeomorphic to $M_{k^{\prime}+6j,l}$. 
    Then Theorem \ref{result} (5) implies  $k \equiv k^{\prime}+6j \pmod 6$. Hence $k \equiv k^{\prime} \pmod 6$.
    The proof is analogous for $M^{\prime}_{k,l}$.
\end{proof}

\begin{remark}
    Combining Theorem \ref{homotopy} and Lemma \ref{fiber},
    we obtain that the homotopy classification of the total spaces $M_{k,l}$  and the homotopy classification of the 
    pairs $(M_{k,l},S^3)$  coincide. 
    The statemen holds when $M_{k,l}$ and $(M_{k,l},S^3)$ are replaced by $M_{k,l}^{\prime}$ and $(M^{\prime}_{k,l},S^3)$, respectively.
\end{remark}


\noindent \textbf{Acknowledgement.}
The author wishes to thank Yang Su for his guidance and many constructive discussions.
\bibliography{reference.bib}
\bibliographystyle{unsrt}

\end{document}